\begin{document}

\title{Pinched self-dual Weyl curvature on Einstein four-manifolds}


\author{Inyoung Kim}
\maketitle
\begin{abstract}
We show that a compact oriented riemannian four-manifold with harmonic and pinched self-dual Weyl curvature
is anti-self-dual if the type is nonpositive. 
 The main part is to show that there is an almost-K\"ahler structure
outside the zero set of the self-dual Weyl curvature. 
\end{abstract}

\maketitle

\section{\large\textbf{Introduction}}\label{S:Intro}
Let $(M, g)$ be an oriented riemannian four-manifold and let
\[R(X, Y)Z:=\nabla_{X}\nabla_{Y}Z-\nabla_{Y}\nabla_{X}Z-\nabla_{[X, Y]}Z,\]
where $\nabla$ is the Levi-Civita connection. 
 Then there is a symmetric bilinear map 
 $R:\Lambda^{2}M\times\Lambda^{2}M\to \mathbb{R}$ such that 
 \[R(X\wedge Y, V\wedge W):=g(R(X, Y)W, V),\]
 where 
 $\Lambda^{2}M$ is the set of bivectors. 
 Then the curvature operator is defined by 
 \[g(\mathfrak{R}(X\wedge Y), V\wedge W)=R(X\wedge Y, V\wedge W).\]
According to the decomposition of $\Lambda^{2}=\Lambda^{+}\oplus \Lambda^{-}$,
the curvature operator $\mathfrak{R}:\Lambda^{2}\to\Lambda^{2}$ is given by 
\[ 
\mathfrak{R}=
\LARGE
\begin{pmatrix}

 \begin{array}{c|c}
\scriptscriptstyle{W^{+}\hspace{5pt}\scriptstyle{+}\hspace{5pt}\frac{s}{12}}I& \scriptscriptstyle{ric_{0}}\\
 \hline
 
 \hspace{10pt}
 
 \scriptscriptstyle{ric_{0}}& \scriptscriptstyle{W^{-}\hspace{5pt}\scriptstyle{+}\hspace{5pt}\frac{s}{12}}I\\
 \end{array}
 \end{pmatrix}
 \]
 where $s$ is the scalar curvature. 
 The Hodge star operator is defined by $\left<*\omega_{1}, \omega_{2}\right>d\mu=\omega_{1}\wedge\omega_{2}$ 
 for $k$-forms $\omega_{1}, \omega_{2}$ on $M$, where 
 $d\mu$ is the volume form of $(M, g)$.
 A 2-form $\omega\in\Lambda^{+}$ is self-dual if $*\omega=\omega$
 and $\omega\in\Lambda^{-}$ is anti-self-dual if $*\omega=-\omega$.
When $ric_{0}=0$, $(M, g)$ is Einstein. 
 When $W^{\pm}=0$, $g$ is an anti-self-dual(self-dual) metric respectively.

 \vspace{30pt}

\hrule

\vspace{10pt}

$\mathbf{Keywords}$: four-manifold, almost-K\"ahler, self-dual Weyl curvature, Einstein metric

$\mathbf{MSC}$: 53C20, 53C21, 53C55

\vspace{10pt}

Republic of Korea, 

Email address: kiysd5@gmail.com

Let $(M, g)$ be an almost-Hermitian four-manifold. 
Then for tangent vectors $X, Y\in T_{p}M$, $g(X, Y)=g(JX, JY)$. 
The fundamental 2-form is defined by 
\[\omega(X, Y):=g(JX, Y).\]
When $d\omega=0$, $(M, g, \omega, J)$ is almost-K\"ahler. 
If $J$ is integrable, then $(M, g, \omega, J)$ is Hermitian. 
When $J$ is integrable and $d\omega=0$, $(M, g, \omega, J)$ is K\"ahler. 
We note that on K\"ahler manifolds, $\nabla\omega=0$, where $\nabla$ is the Levi-Civita connection of $(M, g)$.

Let $(M, g, \omega, J)$ be an almost-Hermitian four-manifold and let $\{X, JX, Y, JY\}$ be an orthonormal basis. 
The holomorphic sectional curvature is defined by 
\[H(X):=R(X\wedge JX, X\wedge JX)\] 
and the orthogonal holomorphic sectional curvature is defined by 
\[H_{ohol}(X, Y):=\frac{1}{2}(R(X\wedge JX, X\wedge JX)+R(Y\wedge JY, Y\wedge JY)).\]
Let $H_{omax}(H_{omin})$ is the maximum(minimum) of the orthogonal holomorphic sectional curvature 
and $\lambda_{i}$ be the eigenvalues of $W^{-}$ such that
$\lambda_{1}\leq\lambda_{2}\leq\lambda_{3}$. 
We show that 
\[H_{omax}=\frac{s+3s^{*}}{24}+\frac{\lambda_{3}}{2}, \hspace{5pt}  H_{omin}=\frac{s+3s^{*}}{24}+\frac{\lambda_{1}}{2}, \]
where $s^{*}=2R(\omega, \omega)$.
For a K\"ahler-Einstein surface, we get
\[H_{max}=\frac{s}{6}+\frac{\lambda_{3}}{2}, \hspace{5pt}  H_{min}=\frac{s}{6}+\frac{\lambda_{1}}{2}, \]
where $H_{max}(H_{min})$ is the maximum(minimum) of the holomorphic sectional curvature. 

Let $M$ be a compact K\"ahler-Einstein surface with nonpositive bisectional curvature.
Suppose
\[H_{av}-H_{min}\leq a(H_{max}-H_{min}),\]
for some $a<\frac{2}{3(1+\sqrt{6/11})}$.
Here 
\[H_{av}(p):=\frac{1}{Vol(\mathbb{S}^{3})}\int_{S_{p}}Hda,\]
where $S_{p}$ is the unit sphere of $T_{p}M$. 
By Berger's Theorem [5], $H_{av}(p)=\frac{s}{6}$ for a K\"ahler surface.
Siu-Yang showed that $M$ is biholomorphically isometric to a compact quotient of the complex 2-ball with an invariant metric [26]. 
Improving results by Chen, Hong and Yang [7], Guan [10] showed that a compact K\"ahler-Einstein surface with nonpositive scalar curvature is a compact quotient
of the complex 2-dimensional unit ball or plane if 
\[H_{av}-H_{min}\leq \frac{1}{2}(H_{max}-H_{min}).\]

It was shown that a certain bounded differentiable function on $M-N$ is superharmonic,
where $N$ is the set of ball-like points and 
it was mentioned that such a function 
can be extended as a superharmonic function on $M$ [26]. 
The coefficient was improved by $1/2$
by using a different superharmonic function, which was called Hong Cang Yang's function [10]. 
This function may not be differentiable on $M-N$. 
Moreover, the result was shown without a hypothesis on the nonpositive bisectional curvature [10]. 
The function is $\Phi=3B-A$ on K\"ahler-Einstein surfaces, where 
$A$ and $B$ are given by  
\[H_{max}=H_{min}+\frac{1}{2}(A+B)\]
\[H_{av}=H_{min}+\frac{1}{3}A.\]
In terms of $\lambda_{i}$, we have 
\[A=-\frac{3}{2}\lambda_{1}, \hspace{5pt} B=\lambda_{3}+\frac{1}{2}\lambda_{1},\]
where $\lambda_{i}$ are the eigenvalues of $W^{-}$ such that $\lambda_{1}\leq\lambda_{2}\leq\lambda_{3}$.
Then $\Phi=3B-A=-3\lambda_{2}$,
which was mentioned in [10]. 
The set $A=B$ in [10] is the set where $2\lambda_{1}+\lambda_{3}=0$.

Let $\delta W^{+}:=-\nabla\cdot W^{+}$. 
A four-dimensional oriented riemannian manifold $(M, g)$ is said to have harmonic self-dual Weyl curvature if $\delta W^{+}=0$.
Let $(M, g)$ be a compact riemannian four-manifold with harmonic self-dual Weyl curvature and nonpositive scalar curvature. 
Let $\lambda_{i}$ be the eigenvalues of $W^{+}$ such that $\lambda_{1}\leq\lambda_{2}\leq\lambda_{3}$. Suppose 
\[-8\left(1-\frac{\sqrt{3}}{2}\right)\lambda_{1}\leq \lambda_{3}\leq -2\lambda_{1}.\]
Polombo showed that $(M, g)$ is anti-self-dual [23]. We note that 
$-8\left(1-\frac{\sqrt{3}}{2}\right)\lambda_{1}\leq \lambda_{3}$ implies $\lambda_{1}+\lambda_{3}\geq 0$. 
It was shown that the zero set $W^{+}$ has codimension $\geq 2$ for a compact, oriented four-manifold
with harmonic self-dual Weyl curvature [3], [23]. 

We note that Guan's pinching condition 
is equivalent to
\[\lambda_{1}+\lambda_{3}\geq 0,\]
where $\lambda_{i}$ are the eigenvalues of $W^{-}$ such that $\lambda_{1}\leq\lambda_{2}\leq\lambda_{3}$. 
Since $\lambda_{1}+\lambda_{2}+\lambda_{3}=0$, this is equivalent to $\lambda_{2}\leq 0$. 
Since $\lambda_{1}\leq 0$ and $\lambda_{3}\geq 0$, this condition is equivalent to $det W^{-}=\lambda_{1}\lambda_{2}\lambda_{3}\geq 0$.  
Compact riemannian four-manifolds with harmonic self-dual Weyl curvature with $det W^{+}>0$ were classified by LeBrun [20] and Wu [27]. 
Moreover, LeBrun showed that the condition $det W^{+}>0$ can be improved by $\lambda_{2}\leq\frac{\lambda_{3}}{4}$ and $W^{+}$ is nowhere zero. 
It was shown by LeBrun that the largest eigenvalue of $W^{+}$ is a smooth function if $det W^{+}>0$ [20]. 
In particular, this implies $\lambda_{3}$ is a smooth function on $M-N$ if $det W^{+}\geq 0$, where $N$ is the zero set of $W^{+}$.
By changing the orientation, we get the same result with respect to $W^{-}$.
Then this implies that in case of K\"ahler-Einstein surfaces, $H_{max}$ is a smooth function on $M-N$, 
where $N$ is the set of ball-like points.

In [15], it was shown that a compact oriented riemannian four-manifold with harmonic self-dual Weyl curvature is anti-self-dual
if $det W^{+}\geq 0$ and the scalar curvature is nonpositive.
Instead of considering the set $A=B$, the set $M_{W}$, where the distinct number of eigenvalues of $W^{+}$ is constant [8], was considered. 
$M_{W}$ is a dense open set and $\lambda_{i}$ are differentiable on $M_{W}$ [8]. 
It was shown that that the function 
$\Phi^{1/3}$, where $\Phi=-\lambda_{2}$ is strictly superharmonic on $M_{W}$ and $\Phi^{-2}|d\Phi|^{2}=0$ on $M_{W}$, which is a contradiction unless $M=N$.
This generalizes Guan's Theorem and Polombo's Theorem.

In this article, we consider compact oriented riemannian four-manifolds with harmonic self-dual Weyl curvature
and $\lambda_{2}\leq\frac{\lambda_{3}}{4}$, where $\lambda_{i}$ are the eigenvalues of $W^{+}$ such that $\lambda_{1}\leq\lambda_{2}\leq\lambda_{3}$. 
Define a metric $g:=(\lambda_{3})^{2/3}h$ on $M-N$, where $N$ is the zero set of $W^{+}$. 
Following [20], instead of a function $\Phi$, we use an eigenform with the largest eigenvalue of $W^{+}$ with respect to $g$.
Working with the eigenform $\omega$ with the eigenvalue $\lambda_{3}$, we only need to consider the zero set. 
When we assume $det W^{+}\geq 0$, we get a K\"ahler structure. 
On the other hand, if we assume a weaker condition $\lambda_{2}\leq\frac{\lambda_{3}}{4}$, we get an almost-K\"ahler structure.
Then by the argument of LeBrun using almost-K\"ahler geometry [18], [19], 
we get that $W^{+}$ has a degenerate spectrum. 

Using this, we show that a compact riemannian four-manifold with self-dual Weyl curvature is anti-self-dual
if $\lambda_{2}\leq\frac{\lambda_{3}}{4}$ and the type is nonpositive. 
This improves the pinching condition and scalar curvature condition in [15].
In Einstein case with positive scalar curvature, we get a result with an improved pinching condition. 
This is possible due to Derdziński's result [8] such that a compact Einstein four-manifold with degenerate spectrum of $W^{+}$ is either anti-self-dual
or $W^{+}$ is nowhere zero. 
In K\"ahler-Einstein surfaces, 
we classify compact K\"ahler-Einstein surfaces with  
\[H_{av}-H_{min}\leq \frac{5}{9}(H_{max}-H_{min}).\]
This improves the coefficient of Guan's result [10].

\vspace{20pt}

\section{\large\textbf{On the holomorphic sectional curvature of Einstein four-manifolds}}\label{S:Intro}
In this section, we consider the maximum and the minimum of the holomorphic sectional curvature of almost-Hermitian Einstein four-manifolds. 
The following result was shown in [12]. 

\newtheorem{Lemma}{Lemma}
\begin{Lemma}
Let $(M, g, \omega, J)$ be an almost-Hermitian four-manifold. 
Then given an anti-self-dual 2-form $\phi$ with unit length , 
there exists an orthonormal basis $\{X, JX, Y, JY\}$ such that $\phi=\frac{X\wedge JX-Y\wedge JY}{\sqrt{2}}$.

\end{Lemma}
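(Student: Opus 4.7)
The plan is to translate the anti-self-dual 2-form $\phi$ into an almost complex structure on $T_pM$ and then use its compatibility with $J$ to produce the required basis.

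First, I would identify $\phi$ with the skew-symmetric endomorphism $\hat\phi$ of $T_pM$ determined by $\phi(X, Y) = g(\hat\phi X, Y)$. The normalization $|\phi|^2 = 1$ (in the paper's convention, so that $(e_i\wedge e_j)$ has unit length for orthonormal $e_i, e_j$) forces $\hat\phi^2 = -\tfrac{1}{2}\,\mathrm{id}$, which is the standard fact that unit elements of $\Lambda^{\pm}$ correspond, after rescaling by $\sqrt{2}$, to $g$-orthogonal almost complex structures; so $J_\phi := \sqrt{2}\,\hat\phi$ is one such structure. Moreover, because $\phi$ is anti-self-dual in the orientation induced by $J$, it lies in the primitive $(1,1)$-part of $\Lambda^2$, equivalently $\phi(JX, JY) = \phi(X, Y)$; on the endomorphism side this yields $\hat\phi J = J\hat\phi$, hence $JJ_\phi = J_\phi J$.

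Next, form $K := JJ_\phi$. A direct computation gives $K^2 = \mathrm{id}$ and $K^{T} = K$, so $K$ is a self-adjoint involution; let $V_\pm$ denote its $\pm 1$ eigenspaces, which are mutually orthogonal and (because $KJ = JK$) $J$-invariant, hence of even real dimension. The extremes $K = \pm\mathrm{id}$ would force $J_\phi = \pm J$, making $\phi = \pm\omega/\sqrt{2}$ self-dual, contradicting the anti-self-duality hypothesis. Thus $\dim V_+ = \dim V_- = 2$.

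Finally, choose any unit vector $X \in V_+$ and any unit $Y \in V_-$; then $\{X, JX\}$ and $\{Y, JY\}$ are orthonormal bases of $V_+$ and $V_-$, so together they give an orthonormal basis of $T_pM$. Since $J_\phi = J$ on $V_+$ and $J_\phi = -J$ on $V_-$, the restriction of $\phi$ is $(X\wedge JX)/\sqrt{2}$ on $V_+$ and $-(Y\wedge JY)/\sqrt{2}$ on $V_-$; mixed terms $\phi(u, v')$ with $u\in V_+$ and $v'\in V_-$ vanish because $\hat\phi$ preserves each eigenspace and $V_+\perp V_-$. This assembles to $\phi = (X\wedge JX - Y\wedge JY)/\sqrt{2}$. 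The main subtlety I expect is bookkeeping in the first step, namely verifying carefully that anti-self-duality of $\phi$ in four dimensions (with the orientation determined by $J$) is indeed equivalent to $J$-invariance of $\phi$ as a 2-form (so that $\hat\phi$ commutes, rather than anti-commutes, with $J$), and that the normalization really produces $\hat\phi^2 = -\tfrac{1}{2}\,\mathrm{id}$ in the inner product being used.
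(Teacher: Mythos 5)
Your proof is correct, but it takes a genuinely different route from the paper. The paper's argument uses the Grassmannian correspondence $Gr_2^+(T_pM)\cong\{(\alpha,\beta)\in\Lambda^+\oplus\Lambda^-\}$ with normalized factors: it observes that $\frac{\omega}{\sqrt{2}}+\phi$, being a sum of a unit self-dual and a unit anti-self-dual form, is decomposable, writes it as a wedge of orthonormal $1$-forms, and then uses $\omega(X,Y)=g(JX,Y)$ to see that the resulting coframe is $J$-adapted, from which the expression for $\phi$ drops out. You instead work entirely on the endomorphism side: anti-self-duality (with the orientation induced by $J$) means $\phi$ is a primitive $(1,1)$-form, hence $J$-invariant, hence $\hat\phi$ commutes with $J$; the self-adjoint involution $K=JJ_\phi$ then splits $T_pM$ into two orthogonal $J$-invariant $2$-planes on which $J_\phi=\mp J$, and the basis is assembled from unit vectors in each. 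Your approach is more self-contained (it needs only elementary linear algebra plus the standard facts that unit elements of $\Lambda^\pm$ correspond to $\tfrac{1}{\sqrt2}$ times orthogonal complex structures and that $\Lambda^-=\Lambda^{1,1}_0$), and it makes explicit exactly where anti-self-duality enters, namely through $J$-invariance and through ruling out $K=\pm\mathrm{id}$; the paper's approach is shorter once one grants the decomposability fact and produces the adapted coframe directly. One small bookkeeping slip on your side: on the $+1$ eigenspace of $K=JJ_\phi$ one gets $J_\phi=-J$ rather than $J_\phi=J$ (apply $J$ to $JJ_\phi v=v$), but this only swaps the roles of $V_+$ and $V_-$, i.e.\ of $X$ and $Y$, and does not affect the conclusion.
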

\begin{proof}
We follow the proof given in [12]. 
We note that $\frac{\omega}{\sqrt{2}}$ is a self-dual 2-form of unit length. 
Using the correspondence of the Grassmannian
\[Gr^{+}_{2}(T_{p}(M)=\{(\alpha, \beta)\in\Lambda^{+}\oplus\Lambda^{-}:|\alpha|=|\beta|=1\},\]
for a given anti-self-dual 2-form $\phi$ of unit length, 
$\frac{\omega}{\sqrt{2}}+\phi$ is decomposable. 
Then there exist orthonormal 1-forms $e_{1}, e_{2}$ such that 
\[\frac{\omega}{\sqrt{2}}+\phi=e_{1}\wedge e_{2}.\]
Then we get 
\[\frac{\omega}{\sqrt{2}}=\frac{e_{1}\wedge e_{2}+*(e_{1}\wedge e_{2})}{\sqrt{2}}.\]
Since $\omega(X, Y)=g(JX, Y)$, we get $e_{2}=J(e_{1})$. 
Similarly, we have $e_{4}=Je_{3}$ for a positively oriented basis $\{e_{1}, e_{2}, e_{3}, e_{4}\}$.
From this, it follows that 
\[\phi=\frac{e_{1}\wedge Je_{1}-e_{3}\wedge Je_{3}}{\sqrt{2}}.\]
\end{proof}

\begin{Lemma}
Let $(M, g, ,\omega, J)$ be an almost-Hermitian four-manifold and $\lambda_{i}$ be the eigenvalues of $W^{-}$ such that 
$\lambda_{1}\leq\lambda_{2}\leq\lambda_{3}$. 
Then we have 
\[H_{omax}=\frac{s+3s^{*}}{24}+\frac{\lambda_{3}}{2}, \hspace{10pt} H_{omin}=\frac{s+3s^{*}}{24}+\frac{\lambda_{1}}{2},\]
where $H_{omax}(H_{omin})$ be the maximum(minimum) of the orthogonal holomorphic sectional curvature and $s^{*}=2R(\omega, \omega)$. 
\end{Lemma}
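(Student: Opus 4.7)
The plan is to reduce $H_{ohol}(X,Y)$ to an expression involving evaluations of the curvature operator $\mathfrak{R}$ on its self-dual and anti-self-dual parts, and then apply Lemma 1 to range over all unit anti-self-dual 2-forms.

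First I would fix an orthonormal basis $\{X, JX, Y, JY\}$ and set $e_1 = X$, $e_2 = JX$, $e_3 = Y$, $e_4 = JY$. Then the fundamental form is $\omega = e_1\wedge e_2 + e_3\wedge e_4$, so the unit self-dual form $\omega_1^+ := \omega/\sqrt{2}$ and the unit anti-self-dual form $\omega_1^- := (e_1\wedge e_2 - e_3\wedge e_4)/\sqrt{2} = (X\wedge JX - Y\wedge JY)/\sqrt{2}$ give the decompositions
\[X\wedge JX = \tfrac{1}{\sqrt{2}}(\omega_1^+ + \omega_1^-), \qquad Y\wedge JY = \tfrac{1}{\sqrt{2}}(\omega_1^+ - \omega_1^-).\]
Expanding $R(X\wedge JX, X\wedge JX) + R(Y\wedge JY, Y\wedge JY)$ by bilinearity, the cross terms $R(\omega_1^+, \omega_1^-)$ cancel, yielding
\[2H_{ohol}(X,Y) = R(\omega_1^+, \omega_1^+) + R(\omega_1^-, \omega_1^-).\]

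Next I would evaluate each of these two terms using the block structure of $\mathfrak{R}$. Since $ric_0$ sends $\Lambda^+$ into $\Lambda^-$, its contribution to $R(\omega_1^\pm, \omega_1^\pm)$ vanishes, so
\[R(\omega_1^+, \omega_1^+) = \langle W^+\omega_1^+, \omega_1^+\rangle + \tfrac{s}{12}, \qquad R(\omega_1^-, \omega_1^-) = \langle W^-\omega_1^-, \omega_1^-\rangle + \tfrac{s}{12}.\]
For the self-dual piece, I use the definition $s^* = 2R(\omega,\omega)$ together with $|\omega|^2 = 2$ to compute $\langle W^+\omega,\omega\rangle = s^*/2 - s/6$, hence $\langle W^+\omega_1^+, \omega_1^+\rangle = (3s^* - s)/12$, and therefore $R(\omega_1^+, \omega_1^+) = s^*/4$, which is independent of the particular choice of $X,Y$.

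Finally, I would apply Lemma 1 in reverse: as the orthonormal basis $\{X, JX, Y, JY\}$ ranges over all $J$-compatible frames at $p$, the anti-self-dual form $\omega_1^-$ constructed above ranges over the entire unit sphere in $\Lambda^-$. Hence
\[H_{omax} = \tfrac{1}{2}\Bigl[\tfrac{s^*}{4} + \max_{|\phi|=1,\,\phi\in\Lambda^-}\langle W^-\phi,\phi\rangle + \tfrac{s}{12}\Bigr] = \tfrac{s^*}{8} + \tfrac{\lambda_3}{2} + \tfrac{s}{24} = \tfrac{s+3s^*}{24} + \tfrac{\lambda_3}{2},\]
and the analogous min calculation with $\lambda_1$ yields the formula for $H_{omin}$. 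The one place where care is needed is the normalization constants: keeping track of the factor $|\omega|^2 = 2$ in the passage between $\omega$, $\omega_1^+$, and the defining equation $s^* = 2R(\omega,\omega)$. Everything else is bilinear bookkeeping once Lemma 1 is invoked.
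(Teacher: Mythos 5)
Your argument is correct and rests on the same two ingredients as the paper's own proof: Lemma 1, which identifies unit anti-self-dual $2$-forms with $J$-adapted orthonormal frames, and the evaluation of the curvature operator on the self-dual and anti-self-dual parts of $X\wedge JX$ and $Y\wedge JY$. The only difference is organizational: you derive the exact pointwise identity $H_{ohol}(X,Y)=\frac{s+3s^{*}}{24}+\frac{1}{2}W^{-}(\phi,\phi)$ and then take the max/min over unit $\phi\in\Lambda^{-}$, whereas the paper obtains the algebraically equivalent relation in component form (via $s^{*}=2R_{1212}+2R_{3434}+4R_{1234}$) and runs a two-sided inequality squeeze using the eigenvectors of $W^{-}$; both routes give the stated formulas.
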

\begin{proof}
Let $r^{*}(X, Y):=tr(Z\to R(X, JZ)JY)$. Then $s^{*}:=tr(r^{*})$
and $s^{*}=2R_{1212}+2R_{3434}+4R_{1234}$, where $R_{ijkl}=R(e_{i}\wedge e_{j}, e_{k}\wedge e_{l})$ for an orthonormal basis
$\{e_{1}, e_{2}, e_{3}, e_{4}\}$. 
Suppose $\phi$ is an eigenvector of $W^{-}$ with the eigenvalue $\lambda_{1}$. 
Then there exists an orthonormal basis $\{X, JX, Y, JY\}$ such that $\phi=\frac{1}{\sqrt{2}}\left(X\wedge JX-Y\wedge JY\right)$ by Lemma 1.
Let $X=e_{1}, JX=e_{2}, Y=e_{3}, JY=e_{4}$.
Then we have 
\[\left(\frac{s}{3}-2W^{-}\right)(\phi, \phi)=R_{1313}+R_{1414}+R_{2323}+R_{2424}+2R_{1234}.\]
\[=\frac{s}{2}+\frac{s^{*}}{2}-2(R_{1212}+R_{3434}).\]
It follows that 
 \[\frac{s}{2}+\frac{s^{*}}{2}-4H_{omin}\geq\frac{s}{2}+\frac{s^{*}}{2}-2(R_{1212}+R_{3434})=\frac{s}{3}-2\lambda_{1}.\]
 From this, we get
 \[\frac{s}{6}+\frac{s^{*}}{2}+2\lambda_{1}\geq 4H_{omin}.\]
Let $H_{omin}=\frac{1}{2}(H(X, JX)+H(Y, JY))$ for an orthonormal basis 
$\{X, JX, Y, JY\}$
and let $\phi=\frac{1}{\sqrt{2}}\left(X\wedge JX-Y\wedge JY\right)$. 
Then we have 
\[\frac{s}{3}-2\lambda_{1}\geq\left(\frac{s}{3}-2W^{-}\right)(\phi, \phi)=\frac{s}{2}+\frac{s^{*}}{2}-2(H(X, JX)+H(Y, JY)).\]
From this, we get
\[H_{omin}\geq\frac{s+3s^{*}}{24}+\frac{\lambda_{1}}{2}.\]
It follows that 
$H_{omin}=\frac{s+3s^{*}}{24}+\frac{\lambda_{1}}{2}$. Similarly, we have $H_{omax}=\frac{s+3s^{*}}{24}+\frac{\lambda_{3}}{2}$.
\end{proof}

We note that for an Einstein manifold and an orthonormal basis $\{X, JX, Y, JY\}$, we have $H_{ohol}(X, Y)=H(X)=H(Y)$
since $H(X, JX)=H(Y, JY)$. 
Thus, we get the following result. 

\newtheorem{Corollary}{Corollary}
\begin{Corollary}
Let $(M, g, \omega, J)$ be an almost-Hermitian Einstein four-manifold and $\lambda_{i}$ be the eigenvalues of $W^{-}$ such that 
$\lambda_{1}\leq\lambda_{2}\leq\lambda_{3}$. 
Then we have 
\[H_{max}=\frac{s+3s^{*}}{24}+\frac{\lambda_{3}}{2}, \hspace{10pt} H_{min}=\frac{s+3s^{*}}{24}+\frac{\lambda_{1}}{2},\]
where $H_{max}(H_{min})$ is the maximum(minimum) of the holomorphic sectional curvature 
and $s^{*}=2R(\omega, \omega)$. 
\end{Corollary}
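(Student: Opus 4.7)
The plan is to reduce the Corollary directly to Lemma 2. By Lemma 2 we already have the identities
\[
H_{omax}=\frac{s+3s^{*}}{24}+\frac{\lambda_{3}}{2},\qquad H_{omin}=\frac{s+3s^{*}}{24}+\frac{\lambda_{1}}{2},
\]
so it suffices to show that under the Einstein hypothesis the maximum (resp.\ minimum) of the orthogonal holomorphic sectional curvature coincides with the maximum (resp.\ minimum) of the holomorphic sectional curvature itself. In other words, the whole task is to establish
\[
H_{omax}=H_{max},\qquad H_{omin}=H_{min}.
\]

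The key step, which is precisely the content of the remark preceding the Corollary, is to prove that $H(X)=H(Y)$ whenever $\{X,JX,Y,JY\}$ is an orthonormal $J$-adapted basis of $T_{p}M$. I would do this by a one-line Ricci computation: expand
\[
r(X,X)+r(JX,JX)=2H(X)+\bigl[K(X,Y)+K(X,JY)+K(JX,Y)+K(JX,JY)\bigr],
\]
and similarly expand $r(Y,Y)+r(JY,JY)=2H(Y)+C$, where the bracketed "cross term" $C$ is manifestly the \emph{same} sum of four sectional curvatures in both expressions. The Einstein condition $r=\frac{s}{4}g$ makes the two left-hand sides equal, so the cross terms cancel and we obtain $H(X)=H(Y)$.

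Once this identity is in hand, the definition of the orthogonal holomorphic sectional curvature gives
\[
H_{ohol}(X,Y)=\tfrac{1}{2}\bigl(H(X)+H(Y)\bigr)=H(X),
\]
so every value taken by $H_{ohol}$ on an orthonormal $J$-basis is also a value of $H$, and conversely every value of $H$ arises as $H_{ohol}(X,Y)$ for the $Y$ completing $X,JX$ to an orthonormal $J$-basis. Hence the two suprema, and the two infima, agree, and substituting into Lemma 2 finishes the proof.

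There is essentially no obstacle here; the only substantive ingredient is the Ricci identity $H(X)=H(Y)$, which is a routine consequence of Einstein. The rest is bookkeeping that reduces $H_{max},H_{min}$ to $H_{omax},H_{omin}$.
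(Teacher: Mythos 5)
Your proposal is correct and follows essentially the same route as the paper: the paper's entire proof is the remark preceding the Corollary, namely that the Einstein condition forces $H(X,JX)=H(Y,JY)$ so that $H_{ohol}(X,Y)=H(X)$ and the extrema of $H_{ohol}$ and $H$ coincide, after which Lemma 2 gives the formulas. Your Ricci-trace computation simply fills in the justification of $H(X)=H(Y)$ that the paper leaves implicit, and it is the standard (correct) one.
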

Let $(M, g, \omega)$ be an almost-K\"ahler four-manifold. 
We have 
\[s^{*}=2R(\omega, \omega)=2\left(\frac{s}{12}I+W^{+}\right)(\omega, \omega)=\frac{s}{3}+2W^{+}(\omega, \omega).\]
From the Weitzenb\"ock formula
\[\left<\Delta\omega, \omega\right>=\left<\nabla^{*}\nabla\omega, \omega\right>-2W^{+}(\omega, \omega)+\frac{s}{3}\left<\omega, \omega\right>,\]
we get 
\[0=|\nabla\omega|^{2}-2W^{+}(\omega, \omega)+\frac{2s}{3}\]
since $\omega$ is a harmonic 2-form of length $\sqrt{2}$. 
It follows that 
\[s^{*}=s+|\nabla\omega|^{2}.\]
For a K\"ahler surface, we have $\nabla\omega=0$, and therefore $s^{*}=s$. 
Thus, we get the following result. 
\begin{Corollary}
Let $(M, g, \omega)$ be a K\"ahler-Einstein surface. 
Let $\lambda_{i}$ be the eigenvalues of $W^{-}$ such that $\lambda_{1}\leq\lambda_{2}\leq\lambda_{3}$
and $H_{max}(H_{min})$ be the maximum(minimum) of the holomorphic sectional curvature. 
Then we have 
\[H_{max}=\left(\frac{s}{6}\right)+\frac{\lambda_{3}}{2}, \hspace{5pt} H_{max}=\left(\frac{s}{6}\right)+\frac{\lambda_{1}}{2}.\]
\end{Corollary}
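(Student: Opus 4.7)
The plan is to obtain this statement as an immediate specialization of Corollary~1 together with the identification of $s^{*}$ with $s$ in the K\"ahler case. Since a K\"ahler-Einstein surface is in particular an almost-Hermitian Einstein four-manifold, Corollary~1 already gives
\[H_{max}=\frac{s+3s^{*}}{24}+\frac{\lambda_{3}}{2},\qquad H_{min}=\frac{s+3s^{*}}{24}+\frac{\lambda_{1}}{2},\]
so the only thing left to verify is that $s^{*}=s$ on a K\"ahler surface, after which substitution yields the coefficient $\frac{s+3s}{24}=\frac{s}{6}$ claimed.

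For the identification $s^{*}=s$, I would appeal to the computation performed just before the statement. There it is shown, using the decomposition $R(\omega,\omega)=\bigl(\frac{s}{12}I+W^{+}\bigr)(\omega,\omega)$ together with the Weitzenb\"ock formula applied to the harmonic form $\omega$ of length $\sqrt{2}$, that
\[s^{*}=s+|\nabla\omega|^{2}\]
on any almost-K\"ahler four-manifold. Since by definition of the K\"ahler condition the fundamental form is parallel, $\nabla\omega=0$, and hence $|\nabla\omega|^{2}=0$, giving $s^{*}=s$ as required.

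With these two ingredients the proof reduces to a one-line substitution, so there is no genuine obstacle; the Corollary is really a packaging of Corollary~1 plus the standard fact $\nabla\omega=0$ for K\"ahler metrics. The only point where one must be a little careful is to note that $\omega$ being a harmonic 2-form of constant length $\sqrt{2}$, which is what licenses the Weitzenb\"ock computation of $s^{*}$, holds automatically in the almost-K\"ahler setting (and a fortiori in the K\"ahler setting), so no separate justification is needed here.
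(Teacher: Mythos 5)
Your proposal is correct and follows exactly the paper's own route: the paper derives $s^{*}=s+|\nabla\omega|^{2}$ for almost-K\"ahler four-manifolds via the Weitzenb\"ock formula in the paragraph preceding the Corollary, specializes to $s^{*}=s$ using $\nabla\omega=0$ in the K\"ahler case, and then substitutes into Corollary~1 to get $\frac{s+3s}{24}=\frac{s}{6}$. No gaps; this matches the paper's argument in both structure and detail.
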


Let $(M, g, \omega)$ be a K\"ahler manifold with complex-dimension $N$ and $H$ be the holomorphic sectional curvature. 
Berger showed
$H\leq\frac{s}{N(N+1)}$ if and only if $H=\frac{s}{N(N+1)}$ [5]. 
Using Lemma 2, this result can be generalized in almost-Hermitian four-manifold with respect to the orthogonal holomorphic sectional curvature. 

\newtheorem{Proposition}{Proposition}
\begin{Proposition}
Let $(M, g, \omega)$ be an almost-Hermitian four-manifold and let $H_{ohol}$ be the orthogonal holomorphic sectional curvature. 
Suppose $H_{ohol}\leq\frac{s+3s^{*}}{24}$ or $H_{ohol}\geq\frac{s+3s^{*}}{24}$. 
Then $H_{ohol}=\frac{s+3s^{*}}{24}$ and $(M, g)$ is self-dual.
\end{Proposition}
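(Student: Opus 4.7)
The plan is to reduce the statement directly to Lemma 2 and exploit the fact that $W^{-}$ is traceless. Indeed, Lemma 2 pins down the extrema of $H_{ohol}$ as
\[
H_{omax}=\frac{s+3s^{*}}{24}+\frac{\lambda_{3}}{2},\qquad H_{omin}=\frac{s+3s^{*}}{24}+\frac{\lambda_{1}}{2},
\]
where $\lambda_{1}\le\lambda_{2}\le\lambda_{3}$ are the eigenvalues of $W^{-}$. So the hypothesis is really a one-sided pinching on $\lambda_{1}$ or $\lambda_{3}$.

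First I would handle the case $H_{ohol}\le\frac{s+3s^{*}}{24}$. Applying this to $H_{omax}$ and substituting the formula from Lemma 2 gives $\lambda_{3}\le 0$, hence $\lambda_{1}\le\lambda_{2}\le\lambda_{3}\le 0$. Since $W^{-}$ is trace-free as an endomorphism of $\Lambda^{-}$, the sum $\lambda_{1}+\lambda_{2}+\lambda_{3}=0$ forces $\lambda_{1}=\lambda_{2}=\lambda_{3}=0$, i.e.\ $W^{-}\equiv 0$, so $(M,g)$ is self-dual. Then Lemma 2 immediately gives $H_{omax}=H_{omin}=\frac{s+3s^{*}}{24}$, and hence $H_{ohol}$ is identically equal to this value.

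The opposite case $H_{ohol}\ge\frac{s+3s^{*}}{24}$ is symmetric: apply the inequality at $H_{omin}$ to obtain $\lambda_{1}\ge 0$, so all three eigenvalues are nonnegative with zero sum, again forcing $W^{-}=0$ and the constant value of $H_{ohol}$.

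There is really no serious obstacle; the only thing to be careful about is that the trace-free property of $W^{-}$ holds pointwise, so the pointwise argument at each $p\in M$ is enough. This is the almost-Hermitian analogue of Berger's pinching argument for holomorphic sectional curvature on K\"ahler manifolds, with the orthogonal holomorphic sectional curvature playing the role of the holomorphic sectional curvature, and with $\frac{s+3s^{*}}{24}$ replacing the K\"ahler average $\frac{s}{N(N+1)}$.
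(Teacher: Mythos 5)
Your argument is correct and is essentially identical to the paper's proof: both apply the formulas of Lemma 2 to $H_{omax}$ (resp.\ $H_{omin}$) to conclude $\lambda_{3}\le 0$ (resp.\ $\lambda_{1}\ge 0$), use the trace-free property $\lambda_{1}+\lambda_{2}+\lambda_{3}=0$ to force $W^{-}=0$, and then read off $H_{ohol}=\frac{s+3s^{*}}{24}$ from the same formulas. No substantive difference.
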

\begin{proof}
If $H_{ohol}\leq\frac{s+3s^{*}}{24}$, we get 
\[\frac{s+3s^{*}}{24}+\frac{\lambda_{3}}{2}\leq\frac{s+3s^{*}}{24}.\]
Thus, $\lambda_{3}=0$. Since $\lambda_{1}\leq\lambda_{2}\leq\lambda_{3}$ and $\lambda_{1}+\lambda_{2}+\lambda_{3}=0$, 
we get $\lambda_{1}=\lambda_{2}=\lambda_{3}=0$.
Since
\[H_{omin}=\frac{s+3s^{*}}{24}+\frac{\lambda_{1}}{2},\]
we get $H_{omin}=\frac{s+3s^{*}}{24}$. Thus, $H_{ohol}=\frac{s+3s^{*}}{24}$. 
Similarly, we get $H_{ohol}=\frac{s+3s^{*}}{24}$ if $H_{ohol}\geq\frac{s+3s^{*}}{24}$.
\end{proof}

\begin{Proposition}
Let $(M, g, J)$ be an almost-Hermitian four-manifold with pointwise constant orthogonal holomorphic sectional curvature. 
Then $(M, g)$ is self-dual. 
\end{Proposition}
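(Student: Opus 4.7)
The plan is to deduce the vanishing of $W^{-}$ directly from Lemma 2, since ``pointwise constant orthogonal holomorphic sectional curvature'' means that at each point $p\in M$ the function $H_{ohol}(X,Y)$ takes the same value for every $J$-compatible orthonormal pair $\{X,JX,Y,JY\}$ in $T_{p}M$. In particular, at each point the maximum and the minimum agree, so $H_{omax}(p)=H_{omin}(p)$.

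By Lemma 2, however,
\[H_{omax}(p)-H_{omin}(p)=\frac{\lambda_{3}(p)-\lambda_{1}(p)}{2},\]
where $\lambda_{1}\leq\lambda_{2}\leq\lambda_{3}$ are the eigenvalues of $W^{-}$ at $p$. The pointwise-constancy hypothesis therefore forces $\lambda_{1}(p)=\lambda_{3}(p)$, and combined with the ordering $\lambda_{1}\leq\lambda_{2}\leq\lambda_{3}$ and the tracelessness $\lambda_{1}+\lambda_{2}+\lambda_{3}=0$, this gives $\lambda_{1}(p)=\lambda_{2}(p)=\lambda_{3}(p)=0$.

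Since this holds at every point, $W^{-}\equiv 0$ on $M$, which by the conventions of the introduction is precisely the statement that $(M,g)$ is self-dual. There is no genuine obstacle here: the proposition is essentially an immediate corollary of Lemma 2, and the only thing to verify is that the identification of the extremal orthogonal holomorphic sectional curvatures with $\frac{s+3s^{*}}{24}+\frac{\lambda_{i}}{2}$ for $i=1,3$ (which is exactly Lemma 2) applies without change when $H_{ohol}$ itself is constant on the appropriate fibre of the Grassmannian of $J$-invariant orthogonal 2-planes.
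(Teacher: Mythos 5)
Your proof is correct and follows exactly the route the paper intends: Proposition 2 is stated without a separate proof precisely because it is the immediate corollary of Lemma 2 that you describe, namely that pointwise constancy forces $H_{omax}=H_{omin}$, hence $\lambda_{1}=\lambda_{3}$, and tracelessness then gives $W^{-}\equiv 0$. No issues.
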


In [14], it was shown that a compact almost-K\"ahler four-manifold with constant holomorphic sectional curvature with $J$-invariant ricci tensor
is K\"ahler with constant holomorphic sectional curvature.
This result can be extended regarding the orthogonal holomorphic sectional curvature. 
The same proof gives following results. 

\begin{Proposition}
Let $(M, g, \omega, J)$ be a compact almost-K\"ahler four-manifold with pointwise constant orthogonal holomorphic sectional curvature. 
Suppose the ricci-tensor is $J$-invariant. 
Then 
\begin{itemize}
\item $(M, g)$ is Einstein; or
\item $(M, g, \omega)$ is K\"ahler and $(M, g)$ is locally a product space of 2-dimensional riemannian manifolds of constant curvature $K$ and $-K$ with $K\neq 0$. 
\end{itemize}

\end{Proposition}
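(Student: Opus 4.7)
The plan is to adapt the proof of the cited theorem of [14] almost verbatim, replacing the constant holomorphic sectional curvature hypothesis by its orthogonal version and using the lemmas of Section 2 as the reduction. Specifically, pointwise constancy of $H_{ohol}$ forces $W^- = 0$ by Proposition 3, and Lemma 2 collapses $H_{omax} = H_{omin}$ to the common value $(s+3s^*)/24$. Combining with the almost-K\"ahler identity $s^* = s + |\nabla\omega|^2$ derived in Section 2, the hypothesis reduces to $W^- = 0$ on a compact almost-K\"ahler four-manifold with $J$-invariant Ricci tensor.

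The analytic core is the Sekigawa-type Weitzenb\"ock calculation for $\Delta s^*$ performed in [14]. Under $J$-invariance of $r$ the indefinite cross terms become symmetric and, after integration on compact $M$, combine with the standard identity
\[\int_M (s^* - s)\, d\mu = \int_M |\nabla \omega|^2\, d\mu\]
to give an integrated expression consisting solely of nonnegative quadratic invariants of $\nabla\omega$ and $\nabla r$. Vanishing of this sum then forces either $\nabla\omega\equiv 0$ (so $(M,g,\omega,J)$ is K\"ahler) or $\nabla r\equiv 0$ (so the Ricci tensor is parallel).

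In the parallel-Ricci branch, either $r$ is a constant multiple of $g$ and we land in the Einstein conclusion, or $r$ has two distinct parallel $J$-invariant eigenspaces of real rank two; de Rham's decomposition theorem then splits $(M,g)$ locally into a product of two Riemann surfaces with $J$ preserving each factor, which automatically forces $\nabla\omega = 0$. Thus after both branches are merged we are either Einstein or K\"ahler. In the K\"ahler case combined with $W^-=0$, the classification of self-dual K\"ahler surfaces shows that $(M,g,\omega,J)$ is either a complex space form---hence K\"ahler-Einstein, subsumed by the first bullet---or locally a product $\Sigma_1\times\Sigma_2$ of Riemann surfaces of constant sectional curvatures $K_1, K_2$. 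A direct computation of $W^-$ on such a product gives its nonzero eigenvalue as $(K_1+K_2)/3$, so $W^- = 0$ forces $K_2 = -K_1 =: -K$, with $K\neq 0$ separating this situation from the flat Einstein alternative.

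The principal obstacle is the Weitzenb\"ock bookkeeping of the second paragraph: one must verify that $J$-invariance of $r$ is exactly what is needed to package the integrand of $\int_M \Delta s^*\,d\mu$ as a sum of squares once the constraint $W^- = 0$ is substituted, a calculation that occupies the bulk of [14]. Once that integral identity is in place, the de Rham splitting and the curvature calculation on the product surface in the third paragraph are both routine.
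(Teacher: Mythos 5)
Your proposal matches the paper's intended argument: the paper offers no proof of this Proposition beyond the remark that ``the same proof [as in [14]] gives the following results,'' and your outline --- reducing pointwise constant $H_{ohol}$ to $W^{-}=0$ via Lemma 2, then running the Sekigawa-type Weitzenb\"ock argument of [14] for compact self-dual almost-K\"ahler four-manifolds with $J$-invariant Ricci tensor, followed by the de Rham splitting and the computation of $W^{-}$ on a surface product --- is exactly that adaptation. The only quibble is a harmless mislabeling: the reduction to $W^{-}=0$ is the paper's Proposition 2, not Proposition 3.
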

\begin{Proposition}
Let $(M, g, \omega, J)$ be a compact almost-K\"ahler four-manifold with $J$-invariant ricci-tensor. 
If the orthogonal holomorpohic sectional curvature is constant, then
\begin{itemize}
\item $(M, g, \omega)$ is K\"ahler with constant holomorphic sectional curvature; or
\item $(M, g, \omega)$ is K\"ahler and $(M, g)$ is locally a product space of 2-dimensional riemannian manifolds of constant curvature $K$ and $-K$ with $K\neq 0$. 
\end{itemize}
\end{Proposition}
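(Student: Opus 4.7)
The plan is to reduce Proposition 5 to the previous Proposition 4 plus the result from [14] cited just above it. Since global constancy of $H_{ohol}$ implies pointwise constancy, the hypotheses of Proposition 4 are satisfied, and we are placed in one of two cases: either $(M,g)$ is Einstein, or $(M,g,\omega)$ is K\"ahler and locally a product of two $2$-dimensional real space forms of curvatures $K$ and $-K$ with $K\neq 0$. The second alternative is exactly the second bullet of our conclusion, so nothing remains to be done there.

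It therefore suffices to handle the Einstein case and upgrade it to the first bullet. For this I would invoke the observation made in the paper just before Corollary 1: on an Einstein four-manifold, for any orthonormal frame $\{X,JX,Y,JY\}$ one has $H(X)=H(Y)$, and hence
\[
H_{ohol}(X,Y) \;=\; \tfrac12\bigl(H(X)+H(Y)\bigr) \;=\; H(X).
\]
Consequently, constancy of the orthogonal holomorphic sectional curvature upgrades to constancy of the ordinary holomorphic sectional curvature $H$ itself. Moreover, the Einstein hypothesis forces the Ricci tensor to be a multiple of the metric, so it is automatically $J$-invariant.

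At this point all the hypotheses of the result from [14] quoted in the paragraph preceding Proposition 4 are in force: $(M,g,\omega,J)$ is a compact almost-K\"ahler four-manifold with $J$-invariant Ricci tensor and with constant holomorphic sectional curvature. That result yields that $(M,g,\omega)$ is K\"ahler and that its holomorphic sectional curvature is constant, which is precisely the first bullet. The main obstacle is not really computational here but one of bookkeeping: one has to check carefully that in the Einstein branch of Proposition 4 no extra hypothesis is needed to apply the theorem of [14], and that the transition from ``orthogonal'' to ``ordinary'' holomorphic sectional curvature genuinely uses nothing beyond the Einstein identity $H(X)=H(Y)$ for orthogonal complex planes. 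Once that is verified, the proof is essentially a concatenation of Proposition 4 with the cited theorem and requires no new estimate.
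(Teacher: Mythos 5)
Your argument is correct, and it is worth recording that the paper itself offers no written proof of this statement: it only remarks that ``the same proof'' as in [14], adapted from $H$ to $H_{ohol}$, yields it. Your route is therefore genuinely different in structure. You treat the immediately preceding proposition (Proposition 3 in the paper's numbering; you call it Proposition 4 --- a harmless slip) as a black box: global constancy of $H_{ohol}$ gives pointwise constancy, its second alternative is verbatim the second bullet here, and in the Einstein branch you upgrade constancy of $H_{ohol}$ to constancy of $H$ via the identity $H(X)=H(Y)$ for orthogonal complex lines on an Einstein four-manifold, which the paper records just before Corollary 1. Since every unit vector $X$ admits a $Y$ completing an orthonormal frame $\{X,JX,Y,JY\}$, the identity $H_{ohol}(X,Y)=\tfrac{1}{2}(H(X)+H(Y))=H(X)$ does show $H$ is globally constant, and the Einstein condition makes the Ricci tensor trivially $J$-invariant, so the quoted theorem of [14] applies as stated and delivers the first bullet. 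What your approach buys is modularity: you never reopen the proof in [14], only its statement, and each added step is elementary. What it costs is that the argument is only as solid as the preceding proposition, which the paper likewise leaves to the method of [14]; a fully self-contained treatment would still require carrying out that adaptation, but as a derivation of the present statement from the results already on the table, your reduction is complete.
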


\vspace{20pt}

\section{\large\textbf{Pinched and harmonic self-dual Weyl curvature}}\label{S:Intro}
Let $(M, g)$ be a compact Einstein manifold with a degenerate spectrum of $W^{+}$. 
Derdziński showed that $(M, g)$ or its double cover is conformal to a K\"ahler metric if $(M, g)$ is not anti-self-dual [8]. 
Wu showed the same result by assuming $det W^{+}>0$ [27]. 
LeBrun proved this result by a different method [20]. Moreover, LeBrun showed that the hypothesis $det W^{+}>0$ can be weakened by 
$\lambda_{2}\leq\frac{\lambda_{3}}{4}$ and $W^{+}$ is nowhere zero where $\lambda_{i}$ are the eigenvalues of $W^{+}$
such that $\lambda_{1}\leq\lambda_{2}\leq\lambda_{3}$. 

\newtheorem{Theorem}{Theorem}
\begin{Theorem}
(LeBrun [20]) Let $(M, g)$ be a compact oriented four-manifold with $\delta W^{+}=0$. 
Suppose $W^{+}$ is nowhere zero and $\lambda_{2}\leq \frac{\lambda_{3}}{4}$,
where $\lambda_{i}$ are the eigenvalues of $W^{+}$ such that $\lambda_{1}\leq\lambda_{2}\leq\lambda_{3}$. 
Then $det W^{+}>0$. Moreover,
\begin{itemize}
\item $b_{+}(M)=1$ and $(M, g)$ is conformal to a K\"ahler metric with positive scalar curvature; or
\item $b_{+}(M)=0$ and the double cover of $(M, g)$ is conformal to a K\"ahler metric with positive scalar curvature. 
\end{itemize}
\end{Theorem}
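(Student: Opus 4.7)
The strategy is to extract an almost-K\"ahler (and then K\"ahler) structure from the largest eigenspace of $W^{+}$ in a suitable conformal gauge, and to read off both $\det W^{+}>0$ and the $b_{+}$ dichotomy from the resulting K\"ahler structure. The pinching $\lambda_{2}\leq\lambda_{3}/4$ together with $W^{+}\neq 0$ yields a uniform spectral gap $\lambda_{3}-\lambda_{2}\geq\tfrac{3}{4}\lambda_{3}>0$, so $\lambda_{3}$ is a smooth positive function on $M$ and the top eigenline $L\subset\Lambda^{+}$ is a smooth real line subbundle. Pass to a double cover if $L$ fails to be orientable, pick a smooth unit section $\omega_{0}$ of $L$, and then set $\tilde{g}=\lambda_{3}^{2/3}g$, rescaling $\omega_{0}$ to $\omega$ so that $|\omega|_{\tilde{g}}$ is constant and the top eigenvalue of $W^{+}$ computed in the rescaled metric is a constant.

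The heart of the argument is to show that $d\omega=0$ in the gauge $\tilde{g}$, so that $(M,\tilde{g},\omega)$ is almost-K\"ahler. One starts from the second Bianchi identity, which in dimension four couples $\delta W^{+}$ to a Cotton-type derivative of the trace-free Ricci tensor, and contracts this identity against $\omega$. Decomposing $\nabla\omega$ along the splitting $\Lambda^{+}=\mathbb{R}\omega\oplus L^{\perp}$, the component in $L^{\perp}$ is controlled algebraically by $\nabla\lambda_{3}$ together with the spectral gap $\lambda_{3}-\lambda_{2}$, while the hypothesis $\delta W^{+}=0$ eliminates the remaining obstruction. The conformal weight $2/3$ is chosen exactly so that the trace term cancels, and the upshot is $d\omega=0$ on $M$ (or on its double cover).

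With an almost-K\"ahler structure in place, a Weitzenb\"ock-type argument for the self-dual harmonic form $\omega$, using $\delta W^{+}=0$ once more, forces the associated almost-complex structure $J$ to have vanishing Nijenhuis tensor, so $(M,\tilde{g},\omega,J)$ is genuinely K\"ahler. On a K\"ahler surface the spectrum of $W^{+}$ is the standard $\bigl(-\tfrac{s}{12},-\tfrac{s}{12},\tfrac{s}{6}\bigr)$; positivity of $\lambda_{3}$ thus gives $s(\tilde{g})>0$ and $\det W^{+}>0$. The two alternatives for $b_{+}$ correspond to whether $L$ was orientable in the first place: if so, $\omega$ is a globally defined self-dual harmonic form and Kodaira vanishing for the K\"ahler metric $\tilde{g}$ with positive scalar curvature forces $b_{+}(M)=1$; if not, the construction only succeeds on the double cover and one has $b_{+}(M)=0$ downstairs.

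The main technical obstacle is the passage from harmonicity of $W^{+}$ to closedness of $\omega$. It requires carefully isolating the components of $\nabla\omega$ that are algebraically determined by $\delta W^{+}$, with the pinching $\lambda_{2}\leq\lambda_{3}/4$ entering quantitatively as precisely what guarantees invertibility of the relevant algebraic operator on $L^{\perp}$. Once $d\omega=0$ is established the remaining steps are essentially structural: the integrability of $J$ and the computation of the K\"ahler $W^{+}$-spectrum are then standard.
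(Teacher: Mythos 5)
The paper does not actually prove this theorem; it is quoted from LeBrun [20], and the closest in-house version of the argument is Proposition 5 together with the proof of Theorem 3 (the same method, adapted to allow a zero set of $W^{+}$). Measured against that argument, your overall architecture is right: the conformal gauge $\tilde g=\lambda_{3}^{2/3}g$, the top eigenform $\omega$ normalized to constant length, the two-stage conclusion $d\omega=0$ and then K\"ahler, and the reading-off of $\det W^{+}>0$ and the $b_{+}$ dichotomy from the K\"ahler spectrum $(-s/12,-s/12,s/6)$ all match. (One small slip: the top eigenvalue $\alpha_{\tilde g}$ is not constant in the rescaled metric; what the weight $2/3$ arranges is $f\alpha_{\tilde g}\equiv 1$ with $f=\alpha_{\tilde g}^{-1}$, equivalently $\delta_{\tilde g}(fW^{+}_{\tilde g})=0$.)

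The genuine gap is in the central step, where you assert $d\omega=0$. You describe a first-order, essentially pointwise mechanism: contract the second Bianchi identity (relating $\delta W^{+}$ to a Cotton-type derivative of the trace-free Ricci tensor) against $\omega$, and invert an algebraic operator on $L^{\perp}$ whose invertibility is guaranteed by the pinching. No such pointwise argument is available, and the Bianchi identity with $\delta W^{+}=0$ constrains the Ricci tensor, not $\nabla\omega$. The actual mechanism is second order and global: one pairs the Weitzenb\"ock identity for the divergence-free tensor $fW^{+}$, namely $0=\langle\nabla^{*}\nabla(fW^{+})+\tfrac{s}{2}fW^{+}-6fW^{+}\circ W^{+}+2f|W^{+}|^{2}I,\ \omega\otimes\omega\rangle$, with the estimates $W^{+}(\nabla_{e}\omega,\nabla^{e}\omega)\leq\lambda_{2}|\nabla\omega|^{2}\leq\tfrac{\lambda_{3}}{4}|\nabla\omega|^{2}$ (this is the only place the pinching enters, and it enters as an inequality, not as invertibility of an operator) and $|W^{+}|^{2}\geq\tfrac{3}{2}\lambda_{3}^{2}$, to obtain the pointwise differential inequality $\langle\omega,\Delta\omega\rangle\leq 0$. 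This alone does not force $d\omega=0$ at any point; one must integrate over the compact $M$ (here with $N=\emptyset$ there are no boundary terms) to get $0\geq\int_{M}\langle\omega,\Delta\omega\rangle\,d\mu=2\int_{M}|d\omega|^{2}\,d\mu$. Compactness is therefore essential, not a convenience. The same comment applies to your second stage: integrability is not obtained by killing the Nijenhuis tensor separately, but by running the same identity again after $d\omega=0$ is known, now using the almost-K\"ahler identity $W^{+}(\nabla_{e}\omega,\nabla^{e}\omega)=-\tfrac14|\nabla\omega|^{2}W^{+}(\omega,\omega)$ to conclude $\nabla\omega=0$ outright. As written, your proposal names the right destination but supplies a mechanism that would not get there.
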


\begin{Theorem}
(B\"ar, Polombo) Let $(M, g)$ be a compact oriented, riemannian four-manifold with harmonic self-dual Weyl curvature. 
If $(M, g)$ is not anti-self-dual, then the zero set of $W^{+}$ has codimension $\geq 2$.
\end{Theorem}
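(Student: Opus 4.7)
The plan is to show that $W^+$, viewed as a section of the rank-$5$ bundle $S_0^2(\Lambda^+)$ of symmetric trace-free endomorphisms of $\Lambda^+$, satisfies a Laplace-type elliptic equation
\[\nabla^*\nabla W^+ + \mathcal{Q}(W^+) = 0,\]
where $\mathcal{Q}$ is a fibrewise linear operator whose coefficients depend on the full Riemann curvature of $g$. Once this is in place, the theorem follows from the theory of zero sets of solutions to elliptic systems: Aronszajn's unique continuation rules out any interior of zeros, while the sharper analysis of Bär rules out codimension-$1$ behaviour of the zero set.

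First I would derive the Weitzenböck-type identity. The differential Bianchi identity expresses the covariant exterior derivative of $W^+$, viewed as a bundle-valued $2$-form, in terms of the derivative of the Ricci tensor, while the hypothesis $\delta W^+=0$ gives the formally adjoint first-order condition. Combining them produces a first-order elliptic system for $W^+$, and composing with its formal adjoint yields the desired second-order Laplace-type equation with all curvature dependence collected in $\mathcal{Q}$. Aronszajn's unique continuation theorem then forces any non-trivial solution $W^+$ to vanish to finite order at each point of its zero set; in particular, if $(M,g)$ is not anti-self-dual then the zero set $Z$ of $W^+$ has empty interior.

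The main obstacle is upgrading ``empty interior'' to ``Hausdorff codimension $\geq 2$''. For this I would follow Bär's approach to nodal sets of solutions to elliptic systems. At any $p\in Z$, let $k$ be the finite order of vanishing of $W^+$; then the leading homogeneous Taylor polynomial $P_k$ of $W^+$ at $p$ is a non-zero, $S_0^2(\Lambda^+)_p$-valued homogeneous harmonic polynomial of degree $k$ on $T_pM\cong\mathbb{R}^4$, as can be seen by inserting the Taylor expansion into the Laplace-type equation and reading off the top-order terms. One then argues that, because the target fibre has rank $\geq 2$ and the components of $P_k$ are coupled by the elliptic equation, the joint zero set of these components in $T_pM$ has real codimension at least $2$. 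Standard perturbation arguments, again leveraging the elliptic equation to control higher-order terms, show that $Z$ is locally contained in a small perturbation of the zero set of $P_k$, whence $Z$ itself has Hausdorff codimension $\geq 2$. The delicate point is verifying that the coupling between components of $P_k$ really does force codimension-$2$ joint vanishing; this is precisely the content of Bär's theorem on zero sets, and it is where the structure of $S_0^2(\Lambda^+)$ (as opposed to a scalar equation, whose nodal set can be a hypersurface) becomes essential.
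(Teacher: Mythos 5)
The paper does not actually prove this statement: Theorem~2 is quoted from B\"ar [3] and Polombo [23] with no argument supplied, so there is no ``paper's own proof'' to compare against, only the cited sources. Your outline does follow the architecture of those sources (second Bianchi identity plus $\delta W^{+}=0$ gives a first-order elliptic system for $W^{+}\in\Gamma(S^{2}_{0}(\Lambda^{+}))$; Aronszajn gives finite vanishing order; B\"ar's nodal-set analysis gives the codimension bound), but as written it contains one genuine gap at exactly the step you flag as ``delicate.''

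The gap is this: the codimension-$2$ conclusion cannot be extracted from the second-order Laplace-type equation $\nabla^{*}\nabla W^{+}+\mathcal{Q}(W^{+})=0$, which is the equation you propose to feed into the Taylor-polynomial argument. When you insert the Taylor expansion at $p\in Z$ and read off top-order terms, the curvature term $\mathcal{Q}$ and all connection coefficients drop out, and what remains is the flat Laplacian acting \emph{componentwise}: the leading polynomial $P_{k}$ is just a $5$-tuple of homogeneous harmonic polynomials with no coupling whatsoever at leading order. In particular $P_{k}=p(x)\,e$ for a fixed $e\in S^{2}_{0}(\Lambda^{+})_{p}$ and a scalar harmonic $p$ is perfectly consistent with the second-order equation, and its zero set is a hypersurface. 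So ``rank $\geq 2$ plus coupling by the elliptic equation'' is not available from the equation you are using. What actually forces codimension $2$ is the \emph{first-order} system: the second Bianchi identity shows that $\delta W^{+}=0$ implies $d^{\nabla}W^{+}=0$ as well (viewing $W^{+}$ as a $\Lambda^{+}$-valued $2$-form), so $W^{+}$ satisfies $(d^{\nabla}+\delta^{\nabla})W^{+}=0$, a generalized Dirac equation in B\"ar's sense. The leading Taylor polynomial then satisfies the constant-coefficient Dirac-type equation $D_{0}P_{k}=0$, and it is the invertibility of the Clifford-type symbol of $D_{0}$ on nonzero covectors that rules out vanishing on a hypersurface: if $P_{k}$ vanished on a hyperplane through the origin, the equation would force the normal derivative to vanish there too, and inductively all derivatives, contradicting $P_{k}\neq 0$. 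Your proof should therefore carry the first-order system (not merely its second-order consequence) into the nodal-set analysis; with that correction the argument is the one in [3] and [23].
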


Let $(M, g)$ be a compact oriented riemannian four-manifold with harmonic self-dual Weyl curvature. 
Suppose $\lambda_{2}\leq\frac{\lambda_{3}}{4}$, 
where  $\lambda_{i}$ are the eigenvalues of $W^{+}$
such that $\lambda_{1}\leq\lambda_{2}\leq\lambda_{3}$. 
Then $\lambda_{3}$ is a well-defined smooth function on $X:=M-N$, 
where $N$ is the zero set of $W^{+}$. 
If not, then $\lambda_{2}=\lambda_{3}$ at a point $p\in X$.
Since $\lambda_{2}\leq\frac{\lambda_{3}}{4}$, we have $\lambda_{2}=\lambda_{3}=0$ at $p$ 
and therefore $p\in N$ since $\lambda_{1}+\lambda_{2}+\lambda_{3}=0$, which is a contradiction. 
Therefore, $\lambda_{2}<\lambda_{3}$ on $X$
and $\lambda_{3}$ has multiplicity one and $\lambda_{3}$ is a smooth function on $X$. 
We note that if $det W^{+}\geq 0$, then $\lambda_{2}\leq\frac{\lambda_{3}}{4}$.
Let $\alpha_{h}:=\lambda_{3}$. The eigenspace of $\alpha_{h}$ varies smoothly from a point to point on $X$.
Thus, the eigenspace of $\alpha_{h}$ defines a line bundle on $X$
and this line bundle is extended to $M$ since it is defined outside the set of codimension $\geq 2$.

\begin{Proposition}
Let $(M, h)$ be a compact oriented riemannian four-manifold such that $\delta W^{+}=0$ and 
let $N$ be the zero set of $W^{+}$. 
Let $\lambda_{i}$ be the eigenvalues of $W^{+}$ such that 
$\lambda_{1}\leq\lambda_{2}\leq\lambda_{3}$. Suppose $(M, h)$ is not anti-self-dual and $\lambda_{2}\leq\frac{\lambda_{3}}{4}$.
Let $\alpha_{h}:=\lambda_3$ on $X:=M-N$. 
Suppose the extended line bundle $L$ of the top eigenspace of $W^{+}$ on $M$ is trivial. 
Let $g:=f^{-2}h$, where $f=\alpha_{h}^{-1/3}$.
Then the eigenvector $\omega$ of $W^{+}_{g}$ with eigenvalue $\alpha_{g}=f^{2}\alpha_{h}$ is defined on $X$
and we have 
$W^{+}_{g}(\omega)=\alpha_{g}\omega$ and $|\omega|_{g}^{2}=2$.
Moreover, $d\omega=0$ on $X$. 
 \end{Proposition}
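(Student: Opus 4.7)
The plan is to produce $\omega$ explicitly as a rescaling of a chosen unit $h$-eigenform, verify the three algebraic properties by conformal computation, and then deduce $d\omega = 0$ from the harmonicity $\delta_h W^+ = 0$. Since $L$ is trivial, fix a global smooth section $\hat\omega$ of $L$ over $X$ normalized so that $|\hat\omega|_h^2 = 1$; by construction $W^+_h\hat\omega = \alpha_h\hat\omega$, and smoothness of $\hat\omega$ on $X$ follows from the fact that $\alpha_h$ has multiplicity one there, as noted before the statement. Set $\omega := \sqrt{2}\,f^{-2}\hat\omega = \sqrt{2}\,\alpha_h^{2/3}\hat\omega$.

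The three algebraic properties follow from standard conformal formulas. In dimension four the Hodge star on 2-forms is conformally invariant, so $\Lambda^+_g = \Lambda^+_h$ and $\omega$ is $g$-self-dual. As endomorphisms of $\Lambda^+$, the Weyl tensor scales as $W^+_g = f^2 W^+_h$ (two index-raisings by $g^{-1} = f^2 h^{-1}$ contribute $f^4$, while the $(0,4)$ tensor scales by $f^{-2}$), so $W^+_g\omega = f^2\alpha_h\omega = \alpha_g\omega$. Since $|\beta|_g^2 = f^4|\beta|_h^2$ for any 2-form $\beta$ when $g = f^{-2}h$, we obtain $|\omega|_g^2 = f^4\cdot 2f^{-4} = 2$.

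For $d\omega = 0$: because $\hat\omega\in\Lambda^+$ is nowhere vanishing on $X$, the map $\eta\mapsto\eta\wedge\hat\omega$ is an isomorphism $\Omega^1(X)\to\Omega^3(X)$, hence there is a unique 1-form $\theta$ with $d\hat\omega = \theta\wedge\hat\omega$. A direct computation gives
\[
d\omega \;=\; \sqrt{2}\,\bigl(\tfrac{2}{3}\alpha_h^{-1/3}d\alpha_h + \alpha_h^{2/3}\theta\bigr)\wedge\hat\omega ,
\]
so $d\omega=0$ is equivalent to the Lee-form identity $\theta = -\tfrac{2}{3}\,d\log\alpha_h$. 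To derive this, differentiate $W^+_h\hat\omega = \alpha_h\hat\omega$ covariantly. Since $|\hat\omega|_h$ is constant, $\nabla_X\hat\omega\in\hat\omega^\perp\subset\Lambda^+$, and the $\hat\omega$- and $\hat\omega^\perp$-components give respectively
\[
\langle(\nabla_X W^+_h)\hat\omega,\hat\omega\rangle_h = X\alpha_h, \qquad (\alpha_h I - W^+_h)\nabla_X\hat\omega = -\bigl((\nabla_X W^+_h)\hat\omega\bigr)^\perp .
\]
The latter is solvable for $\nabla_X\hat\omega$ on $X$ because the hypothesis $\lambda_2\leq\lambda_3/4$ together with $W^+\ne 0$ forces $\lambda_3 > \lambda_2 \geq \lambda_1$, so $\alpha_hI - W^+_h$ is strictly positive on $\hat\omega^\perp$. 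Tracing in $X$ to invoke $\delta_h W^+_h = 0$ and combining with the trace-free relation $\lambda_1+\lambda_2+\lambda_3 = 0$ then pins $\theta = -\tfrac{2}{3}\,d\log\alpha_h$, and hence $d\omega = 0$.

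The main obstacle is precisely this last bookkeeping: it is the trace-free normalization of $W^+$ combined with $\delta W^+ = 0$ that produces the coefficient $2/3$, and in turn justifies the conformal exponent $-1/3$ built into $f = \alpha_h^{-1/3}$. Once the Lee-form identity is in place, closedness of $\omega$ on $X$ follows immediately from the display above.
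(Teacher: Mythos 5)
Your construction of $\omega$ and the verification that $W^{+}_{g}\omega=\alpha_{g}\omega$ and $|\omega|_{g}^{2}=2$ are fine and agree with the paper's setup. The gap is in the claim $d\omega=0$. You try to prove it pointwise: you reduce it to the Lee-form identity $\theta=-\tfrac{2}{3}\,d\log\alpha_{h}$ and assert that ``tracing to invoke $\delta_{h}W^{+}_{h}=0$'' together with $\lambda_{1}+\lambda_{2}+\lambda_{3}=0$ pins this down. That step does not go through. The identity $\theta=-\tfrac{2}{3}\,d\log\alpha_{h}$ can be extracted from $\delta W^{+}=0$ only when $W^{+}$ is algebraically determined by $\alpha_{h}$ and $\hat\omega$, i.e.\ in the degenerate case $\lambda_{1}=\lambda_{2}=-\lambda_{3}/2$, where $W^{+}=\tfrac{3}{2}\alpha_{h}\,\hat\omega\otimes\hat\omega-\tfrac{1}{2}\alpha_{h}I$ (this is Derdzi\'nski's argument). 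Here the spectrum is not assumed degenerate --- degeneracy is precisely what Theorem~3 later deduces from this proposition --- so $\delta W^{+}=0$ is a system also involving the derivatives of $\lambda_{1}$, $\lambda_{2}$ and of the other two eigenvectors, and the single divergence (a trace of $\nabla W^{+}$) does not determine the component of $\nabla\hat\omega$ that controls $d\hat\omega$. Your own displayed equation $(\alpha_{h}I-W^{+}_{h})\nabla_{X}\hat\omega=-((\nabla_{X}W^{+}_{h})\hat\omega)^{\perp}$ shows the problem: it expresses $\nabla\hat\omega$ through the full covariant derivative $\nabla W^{+}$, which is not controlled by its divergence alone.

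A telling symptom is that your argument for $d\omega=0$ never uses the compactness of $M$, the codimension $\geq 2$ of $N$, or the quantitative pinching $\lambda_{2}\leq\lambda_{3}/4$ beyond simplicity of $\lambda_{3}$; if a pointwise proof existed, all of these hypotheses would be superfluous. The paper's proof is instead global: pairing $\delta_{g}(fW^{+}_{g})=0$ with $\omega\otimes\omega$ and using the pinching in the form $W^{+}(\nabla_{e}\omega,\nabla^{e}\omega)\leq\beta|\nabla\omega|^{2}\leq\tfrac{\alpha_{g}}{4}|\nabla\omega|^{2}$ together with $|W^{+}|^{2}\geq\tfrac{3}{2}\alpha_{g}^{2}$, one obtains the pointwise inequality $0\geq\tfrac{3}{2}\langle\omega,\Delta\omega\rangle$; one then integrates over $X_{\varepsilon}=\{\,\mathrm{dist}_{h}(\cdot,N)\geq\varepsilon\,\}$, and the boundary terms $\int_{\partial X_{\varepsilon}}\omega\wedge *d\omega$ are shown to vanish as $\varepsilon\to 0$ using Cauchy--Schwarz, the fact that $\mathrm{Vol}(\partial X_{\varepsilon},\tilde h)=\mathcal{O}(\varepsilon)$ because $N$ has codimension $\geq 2$ (Theorem~2), and the boundedness of $\alpha_{h}$ near $N$. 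This forces $\int_{X}|d\omega|_{g}^{2}\,d\mu_{g}=0$ and hence $d\omega=0$. To repair your write-up you would need to replace the Lee-form derivation with an integral argument of this kind.
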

\begin{proof}
Using Lemma 3, Lemma 4 and $|W^{+}|_{g}^{2}\geq\frac{3}{2}\alpha_{g}^{2}$ since $W^{+}$ is trace-free, we have 
\begin{align*}
&0=\left<\delta_{g}(fW^{+}_{g}), \omega\otimes\omega\right>=\left<\nabla^{*}\nabla fW^{+}+\frac{s}{2}fW^{+}-6fW^{+}\circ W^{+}+2f|W^{+}|^{2}I, \omega\otimes\omega\right>\\
&=\left<\nabla^{*}\nabla fW^{+}, \omega\otimes\omega\right>+\left(\frac{s}{2}W^{+}(\omega, \omega)-6|W^{+}(\omega)|^{2}+2|W^{+}|^{2}|\omega|^{2}\right)f.\\
&=2|\nabla\omega|^{2}-2fW^{+}(\nabla^{a}\omega, \nabla_{a}\omega)
+\left(\frac{s}{2}\alpha_{g}|\omega|^{2}-6\alpha_{g}^{2}|\omega|^{2}+2|W^{+}|^{2}|\omega|^{2}\right)fd\mu_{g}\\
&\geq 2|\nabla\omega|^{2}-2f\beta|\nabla\omega|^{2}+\left(\frac{s}{2}\alpha_{g}|\omega|^{2}-3\alpha_{g}^{2}|\omega|^{2}\right)f\\
&\geq2|\nabla\omega|^{2}-2f\frac{\alpha_{g}}{4}|\nabla\omega|^{2}+\left(\frac{s}{2}|\omega|^{2}-3\alpha_{g}|\omega|^{2}\right)\alpha_{g} f\\
&=\frac{3}{2}\left<\omega, \nabla^{*}\nabla\omega\right>+\frac{s}{2}|\omega|^{2}-3W^{+}(\omega, \omega)\\
&=\frac{3}{2}\left(\left<\omega, \nabla^{*}\nabla\omega\right>-2W^{+}(\omega, \omega)+\frac{s}{3}|\omega|^{2}\right)=\frac{3}{2}\left<\omega, \Delta\omega\right>.\\
\end{align*}
Here we used 
\begin{align*}
&\left<\nabla^{*}\nabla(fW^{+}), \omega\otimes\omega\right>\\
&=-2(f\alpha_{g})\left<\omega, \nabla^{a}\nabla_{a}\omega\right>-2fW^{+}(\nabla^{a}\omega, \nabla_{a}\omega)\\
&=\Delta|\omega|^{2}+2|\nabla\omega|^{2}-2fW^{+}(\nabla^{a}\omega, \nabla_{a}\omega)\\
&=2|\nabla\omega|^{2}-2fW^{+}(\nabla^{a}\omega, \nabla_{a}\omega).\\
\end{align*}
Then, we have
\[0\geq \int_{X}\left<\omega, \Delta\omega\right>d\mu_{g}=\int_{X}\left<\omega, (d\delta+\delta d)\omega\right>d\mu_{g},\] 
where $\delta=-*d*$.
Let $X_{\varepsilon}:=\{x\in M|dist_{h}(x, N)\geq\varepsilon>0\}$.
We have 
\[\int_{X_{\varepsilon}}(\delta d\omega, \omega)d\mu=-\int_{X_{\varepsilon}}(*d*d\omega, \omega)d\mu\]
\[=-\int_{X_{\varepsilon}}(d*d\omega)\wedge\omega
=-\int_{X_{\varepsilon}}d(*d\omega\wedge\omega)-\int_{X_{\varepsilon}}*d\omega\wedge d\omega\]
\[=-\int_{\partial X_{\varepsilon}}*d\omega\wedge\omega+\int_{X_{\varepsilon}}|d\omega|^{2}d\mu_{g}.\]
When a form $\alpha$ on $X_{\varepsilon}$ is integrated on $\partial X_{\varepsilon}$, by $\alpha$, we mean $\alpha|_{\partial{X_{\varepsilon}}}$. 
Similarly, 
\[(d\delta\omega, \omega)=-(d(*d*)\omega, \omega)=-(\omega, d(*d*)\omega)=-\int_{X_{\varepsilon}}\omega\wedge d(*d*)\omega\]
\[=-\int_{X_{\varepsilon}}d(\omega\wedge(*d*)\omega)+\int_{X_{\varepsilon}}d\omega\wedge(*d*)\omega
=-\int_{\partial X_{\varepsilon}}\omega\wedge *d\omega+\int_{X_{\varepsilon}}d\omega\wedge *d\omega\]
\[=-\int_{\partial X_{\varepsilon}}\omega\wedge*d\omega-\int_{X_{\varepsilon}}*d\omega\wedge d\omega=-\int_{\partial X_{\varepsilon}}\omega\wedge*d\omega+\int_{X_{\varepsilon}}|d\omega|^{2}d\mu_{g}.\]
From this, it follows that 
\[0\geq\int_{X_{\varepsilon}}\left<(d\delta_{g}+\delta_{g} d)\omega, \omega\right>d\mu_{g}=2\int_{X_{\varepsilon}}|d\omega|^{2}d\mu_{g}
-2\int_{\partial X_{\varepsilon}}\omega\wedge *d\omega.\]
Then, we have
\[\int_{\partial X_{\varepsilon}}\omega\wedge *d\omega\geq\int_{X_{\varepsilon}}|d\omega|_{g}^{2}d\mu_{g}\]

Let $\tilde{g}=g|_{\partial X_{\varepsilon}}$ and $\tilde{h}=h|_{\partial X_{\varepsilon}}$. 
Let $\alpha':=\alpha|_{\partial X_{\varepsilon}}$ for a differential form $\alpha$. 
By $|\alpha|_{\tilde{g}}$, we mean 
$|\alpha|_{\tilde{g}}=|\alpha'|_{\tilde{g}}$.
Then we note that $|\alpha|_{g}\geq|\alpha'|_{\tilde{g}}$.
Let $\varepsilon>0$ be a sufficiently small positive number. Then for fixed such $\varepsilon$, 
\[\int_{\partial X_{\varepsilon}}|*_{g}d\omega|^{2}_{\tilde{g}}d\mu_{\tilde{g}}\leq C'({\varepsilon}),\]
where $C'({\varepsilon})$ is a positive number which depends on $\varepsilon$. 
We note that 
\[|\omega|_{\tilde{g}}\leq |\omega|_{g}, \hspace{5pt} |*_{g}d\omega|_{\tilde{g}}\leq|*_{g}d\omega|_{g}.\]
and
 \[|*_{\tilde{g}}\omega|_{\tilde{g}}=|\omega|_{\tilde{g}}, \hspace{5pt} |d\omega|^{2}_{g}=|*_{g}d\omega|_{g}^{2}.\]
Then by the Cauchy-Schwarz inequality, we have
\[\left|\int_{\partial X_{\varepsilon}}\omega\wedge*d\omega\right|
=\left|\int_{\partial X_{\varepsilon}}\left<*_{\tilde{g}}\omega,  *_{g}d\omega\right>_{\tilde{g}}d\mu_{\tilde{g}}\right|\]
\[\leq\left(\int_{\partial X_{\varepsilon}}|*_{\tilde{g}}\omega|^{2}_{\tilde{g}}d\mu_{\tilde{g}}\right)^{1/2}
\left(\int_{\partial X_{\varepsilon}}|*_{g}d\omega|^{2}_{\tilde{g}}d\mu_{\tilde{g}}\right)^{1/2}\]
\[=\left(\int_{\partial X_{\varepsilon}}|\omega|^{2}_{\tilde{g}}d\mu_{\tilde{g}}\right)^{1/2}
\left(\int_{\partial X_{\varepsilon}}|*_{g}d\omega|^{2}_{\tilde{g}}d\mu_{\tilde{g}}\right)^{1/2}\]
Then we have 
\[\left(\int_{\partial X_{\varepsilon}}|\omega|^{2}_{\tilde{g}}d\mu_{\tilde{g}}\right)^{1/2}\left(\int_{\partial X_{\varepsilon}}|*_{g}d\omega|^{2}_{\tilde{g}}d\mu_{\tilde{g}}\right)^{1/2}
\geq\left|\int_{\partial X_{\varepsilon}}\omega\wedge*d\omega\right|\]
\[\geq\int_{X_{\varepsilon}}|d\omega|_{g}^{2}d\mu_{g}=\int_{X_{\varepsilon}}|*_{g}d\omega|_{g}^{2}d\mu_{g}
\geq\int_{\partial X_{\varepsilon}}|*_{g}d\omega|^{2}_{\tilde{g}}d\mu_{\tilde{g}}\]
Since $\alpha_{h}$ goes to 0 near $N$, $\alpha_{h}$ is bounded on $\partial X_{\varepsilon}$.
Since $N$ is codimension $\geq 2$ by Theorem 2, we note that $Vol(\partial X_{\varepsilon}, \tilde{h})$ is $\mathcal{O}({\varepsilon})$. 
Then we have 
\[\int_{\partial X_{\varepsilon}}|\omega|^{2}_{\tilde{g}}d\mu_{\tilde{g}}\leq2\int_{\partial X_{\varepsilon}}f^{-3}d\mu_{\tilde{h}}=2\int_{\partial X_{\varepsilon}}\alpha_{h}d\mu_{\tilde{h}}
\leq C\varepsilon,\]
for a positive constant $C$.
Thus, for such $\varepsilon$, we get 
\[C\varepsilon\geq\int_{\partial X_{\varepsilon}}|*_{g}d\omega|^{2}_{\tilde{g}}d\mu_{\tilde{g}}.\]
Then we have 
\[C{\varepsilon}\geq\left(\int_{\partial X_{\varepsilon}}|\omega|^{2}_{\tilde{g}}d\mu_{\tilde{g}}\right)^{1/2}
\left(\int_{\partial X_{\varepsilon}}|*_{g}d\omega|^{2}_{\tilde{g}}d\mu_{\tilde{g}}\right)^{1/2}
\geq\int_{X_{\varepsilon}}|d\omega|_{g}^{2}d\mu_{g}.\]
By letting $\varepsilon\to 0$, we get $d\omega=0$ on $X$. 
 \end{proof}

\begin{Lemma}
Let $(M, g)$ be an oriented riemannian four-manifold and let $\omega$ be a smooth self-dual 2-form defined on $M$ such that $|\omega|_{g}$ is constant. 
Suppose $\omega$ is an eigenvector of $W^{+}$ with the largest eigenvalue.
If $\beta=\beta_{g}$ is the middle eigenvalue of $W_{g}^{+}$, then 
\[W_{g}^{+}(\nabla_{e}\omega, \nabla^{e}\omega)\leq \beta|\nabla\omega|^{2}.\]
\end{Lemma}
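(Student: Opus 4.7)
The plan is to reduce this to a pointwise eigenvalue estimate by showing that at each point $\nabla_e\omega$ lies in the orthogonal complement of $\omega$ inside $\Lambda^+$, where the Weyl operator is controlled by the middle eigenvalue.

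First I would differentiate the hypothesis. Since $|\omega|_g^2$ is constant on $M$ and the Levi-Civita connection is metric compatible,
\[0 = \nabla_e |\omega|_g^2 = 2\langle \nabla_e\omega,\omega\rangle\]
for every tangent vector $e$. Thus $\nabla_e\omega$ is pointwise orthogonal to $\omega$ inside the three-dimensional fiber $\Lambda^+$, i.e. $\nabla_e\omega\in(\mathbb{R}\omega)^\perp$.

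Next I would use the spectral structure of $W^+$. Because $W_g^+\colon\Lambda^+\to\Lambda^+$ is self-adjoint and $\omega$ is an eigenvector with the largest eigenvalue $\alpha_g$, the two-dimensional subspace $(\mathbb{R}\omega)^\perp$ is $W_g^+$-invariant and the restriction has eigenvalues $\lambda_1\leq\beta$. Consequently, for any $v\in(\mathbb{R}\omega)^\perp$ one has the pointwise bound $W_g^+(v,v)\leq\beta|v|^2$. Applying this with $v=\nabla_e\omega$ yields
\[W_g^+(\nabla_e\omega,\nabla_e\omega)\leq\beta|\nabla_e\omega|^2\]
at every point.

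Finally, I would sum over a local orthonormal frame $\{e_a\}$ of $TM$. This converts the pointwise inequality into the global trace form stated in the lemma:
\[W_g^+(\nabla^a\omega,\nabla_a\omega) = \sum_a W_g^+(\nabla_{e_a}\omega,\nabla_{e_a}\omega) \leq \beta\sum_a|\nabla_{e_a}\omega|^2 = \beta|\nabla\omega|^2.\]
There is no real obstacle here; the argument is purely linear-algebraic once the orthogonality $\langle\nabla_e\omega,\omega\rangle=0$ is extracted from the constant-norm hypothesis. The only subtle point worth making explicit is that $W_g^+$ genuinely preserves $(\mathbb{R}\omega)^\perp$, which relies on $\omega$ being an eigenvector (not merely a fixed direction).
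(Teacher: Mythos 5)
Your argument is correct and is exactly the content of the paper's own (one-line) proof: the paper simply notes that $|\omega|_{g}^{2}=2$ forces $\nabla\omega\in\Lambda^{1}\otimes\omega^{\perp}$, and the remaining linear algebra — that $W^{+}$ restricted to $\omega^{\perp}$ is bounded above by $\beta$ — is left implicit. You have just spelled out the details the paper omits.
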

\begin{proof}
Since $|\omega|_{g}^{2}=2$, $\nabla\omega\in \Lambda^{1}\otimes\omega^{\perp}$. 
\end{proof}

\begin{Lemma}
(Biquard-Gauduchon-LeBrun)
Let $(M, g)$ be an oriented riemannian four-manifold. 
Suppose $\omega$ is a self-dual harmonic 2-form with contant length such that 
$W^{+}=\alpha_{g}\omega$, where $\alpha_{g}>0$. Let $f:=\alpha_{g}^{-1}$. Then 
\[\left<\nabla^{*}\nabla fW^{+}, \omega\otimes\omega\right>=-2(f\alpha_{g})(\omega, \nabla^{a}\nabla_{a}\omega)-2fW^{+}(\nabla^{a}\nabla_{a}\omega).\]
\end{Lemma}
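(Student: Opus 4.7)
My plan is to exploit the fact that the pointwise scalar $(fW^+)(\omega,\omega) = f\alpha_g\,|\omega|_g^2 = 2$ is constant, so its rough Laplacian vanishes. Expanding $\nabla^a\nabla_a[(fW^+)(\omega,\omega)]=0$ by the Leibniz rule isolates the desired term $(\nabla^a\nabla_a(fW^+))(\omega,\omega)$, and what remains is a single cross term that I would handle by differentiating the eigenvalue equation $W^+(\omega)=\alpha_g\omega$.

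Concretely, the Leibniz rule applied to the symmetric bilinear form $fW^+$ on $\Lambda^+$ gives
\[0 = \nabla^a\nabla_a\bigl[(fW^+)(\omega,\omega)\bigr] = \bigl(\nabla^a\nabla_a(fW^+)\bigr)(\omega,\omega) + 4\bigl(\nabla^a(fW^+)\bigr)(\omega,\nabla_a\omega) + 2(fW^+)(\omega,\nabla^a\nabla_a\omega) + 2(fW^+)(\nabla_a\omega,\nabla^a\omega).\]
Using $W^+(\omega)=\alpha_g\omega$ the third summand reads $2f\alpha_g\langle\omega,\nabla^a\nabla_a\omega\rangle$ and the fourth reads $2fW^+(\nabla_a\omega,\nabla^a\omega)$; after imposing the sign convention $\nabla^*\nabla = -\nabla^a\nabla_a$ these are exactly the two terms appearing on the right-hand side of the lemma. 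So the whole statement reduces to showing that the cross term $4(\nabla^a(fW^+))(\omega,\nabla_a\omega)$ makes no contribution.

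For that cross term, constancy of $|\omega|_g^2$ gives $\langle\omega,\nabla_a\omega\rangle=0$, which kills the piece containing $\nabla^a f$ and leaves $f(\nabla^aW^+)(\omega,\nabla_a\omega)$. Differentiating the eigenvalue equation $W^+(\omega)=\alpha_g\omega$ yields
\[(\nabla^aW^+)(\omega) = (\nabla^a\alpha_g)\,\omega + \alpha_g\,\nabla^a\omega - W^+(\nabla^a\omega),\]
and pairing with $\nabla_a\omega$ (using $\langle\omega,\nabla_a\omega\rangle=0$ once more) gives
\[(\nabla^aW^+)(\omega,\nabla_a\omega) = \alpha_g\,|\nabla\omega|^2 - W^+(\nabla^a\omega,\nabla_a\omega).\]

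Substituting this back, together with the identity $\langle\omega,\nabla^a\nabla_a\omega\rangle = -|\nabla\omega|^2$ (again from constancy of $|\omega|^2$), the two contributions of $|\nabla\omega|^2$ cancel after invoking $f\alpha_g=1$, and one is left with the claimed formula (reading the second term on the right-hand side as $-2fW^+(\nabla^a\omega,\nabla_a\omega)$, consistent with the way the identity is used immediately afterwards in the proof of Proposition~2). The only real obstacle is careful bookkeeping with the normalization $f\alpha_g=1$ and the sign of $\nabla^*\nabla$; no deeper geometric input beyond the Leibniz rule and differentiation of the eigenvalue equation is needed.
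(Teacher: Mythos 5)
The paper does not actually prove this lemma --- it is quoted from Biquard--Gauduchon--LeBrun [6] without argument --- so there is no in-paper proof to compare against; your derivation is a legitimate self-contained one, and I have checked that it closes. Starting from $\nabla^a\nabla_a\bigl[(fW^+)(\omega,\omega)\bigr]=0$ and differentiating the eigenvalue equation is exactly the right elementary route, and you are right to read the misprinted last term of the statement as $-2fW^+(\nabla^a\omega,\nabla_a\omega)$, which is how it is used in Proposition~3 and Theorem~3. One caution about your own bookkeeping, though: your middle paragraph asserts that the third and fourth summands ``are exactly the two terms appearing on the right-hand side of the lemma'' and that the cross term ``makes no contribution''; both claims are wrong as written and contradict your final paragraph. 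After applying $\nabla^*\nabla=-\nabla^a\nabla_a$, those two summands enter with the sign \emph{opposite} to the lemma's right-hand side, and the cross term is precisely what repairs this: by your own computation it equals $4f\alpha_g|\nabla\omega|^2-4fW^+(\nabla^a\omega,\nabla_a\omega)=-2\bigl(2f\alpha_g\langle\omega,\nabla^a\nabla_a\omega\rangle+2fW^+(\nabla^a\omega,\nabla_a\omega)\bigr)$, so adding it reverses the sign of the remaining terms and yields the stated identity. The cancellation of the $|\nabla\omega|^2$ contributions in your last step is partial, not total, and that partial cancellation is where the formula actually emerges; so the final paragraph, carried out carefully, is the whole proof, and the earlier ``no contribution'' claim should simply be deleted. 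Note also that harmonicity of $\omega$ is never used: constancy of $|\omega|^2$, the relation $W^+(\omega)=\alpha_g\omega$, and $f\alpha_g\equiv 1$ suffice.
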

\begin{Theorem}
Let $(M, h)$ be a compact oriented four-manifold with harmonic self-dual Weyl curvature. 
Suppose $\lambda_{2}\leq\frac{\lambda_{3}}{4}$, where $\lambda_{i}$ are the eigenvalues of $W^{+}$ such that $\lambda_{1}\leq\lambda_{2}\leq\lambda_{3}$. 
Then $W^{+}$ has a degenerate spectrum and $det W^{+}\geq 0$.
\end{Theorem}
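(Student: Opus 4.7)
The plan is to apply the Proposition and then exploit the rigidity of its pointwise Weitzenb\"ock chain, the new input being that the 2-form the Proposition produces is automatically harmonic.

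If $(M,h)$ is anti-self-dual, then $W^+\equiv 0$ and the conclusion is vacuous, so I assume $(M,h)$ is not anti-self-dual. By Theorem 2, the zero set $N$ of $W^+$ has codimension $\geq 2$, and the discussion preceding the Proposition makes $\alpha_h:=\lambda_3>0$ a smooth function on $X:=M\setminus N$ whose eigenline bundle $L$ extends smoothly across $N$ to $M$. If $L$ is non-trivial, I pass to the double cover on which it becomes trivial; the hypotheses $\delta W^+=0$ and $\lambda_2\leq\lambda_3/4$ both lift, and the desired conclusions descend from the cover to $M$. Hence I may assume $L$ is trivial and apply the Proposition, obtaining a smooth closed self-dual 2-form $\omega$ on $X$ with respect to $g:=\alpha_h^{2/3}h$, satisfying $|\omega|_g^2=2$ and $W^+_g(\omega)=\alpha_g\omega$.

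The key observation is that $\omega$ is $g$-harmonic on $X$: since $*_g\omega=\omega$ and $d\omega=0$, one gets $\delta_g\omega=-*_gd\omega=0$, so $\Delta_g\omega=0$ pointwise. The pointwise chain built in the proof of the Proposition therefore reads
\[
0=\langle\delta_g(fW^+_g),\omega\otimes\omega\rangle\;\geq\;\tfrac{3}{2}\langle\omega,\Delta_g\omega\rangle=0,
\]
forcing every intermediate inequality to be a pointwise equality on $X$. The algebraic bound $|W^+_g|^2\geq\tfrac{3}{2}\alpha_g^2$, coming from $W^+_g$ being trace-free with largest eigenvalue $\alpha_g$, is thus saturated, which pins the remaining two eigenvalues at $-\alpha_g/2$. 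Hence $W^+_g$ has degenerate spectrum on $X$, with $\det W^+_g=\alpha_g^3/4>0$ there. Conformal invariance of both the degeneracy of the spectrum of $W^+$ and the sign of $\det W^+$ carries these conclusions back to $W^+_h$ on $X$, and on $N$ they hold trivially because $W^+_h\equiv 0$.

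The main subtlety I anticipate concerns the internal consistency of the equality case: the algebraic equality forces $\beta_g=-\alpha_g/2$, while the other inequality used in the chain, $\beta_g\leq\alpha_g/4$, would also have to be equality. These are compatible only because both $\beta_g$ inputs enter the chain multiplied by $|\nabla\omega|^2$, so the equality case in fact forces $\nabla\omega\equiv 0$ on $X$. I must carry this through carefully; it incidentally shows that $g$ is K\"ahler on $X$, a stronger conclusion than is strictly required but harmless.
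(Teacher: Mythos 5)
Your argument is correct, and it follows the paper's skeleton (non-trivial line bundle handled by the double cover, the Proposition supplying the closed eigenform $\omega$ with $|\omega|_g^2=2$, then a return to the conformal Weitzenb\"ock identity on $X$), but the endgame is genuinely different. The paper, once it knows $d\omega=0$ and hence $\Delta_g\omega=0$, recomputes the identity from scratch using two inputs specific to almost-K\"ahler geometry: LeBrun's pointwise identity $W^{+}(\nabla_{e}\omega,\nabla^{e}\omega)=-\tfrac{1}{4}|\nabla\omega|^{2}W^{+}(\omega,\omega)$ (Lemma 1 of [18]) and the Weitzenb\"ock consequence $|\nabla\omega|^{2}=2W^{+}(\omega,\omega)-\tfrac{2}{3}s$; these reduce the identity to $0\geq\tfrac{3}{4}W^{+}(\omega,\omega)|\nabla\omega|^{2}f$, giving $\nabla\omega=0$ first, so that degeneracy is read off from the standard K\"ahler spectrum $(\tfrac{s}{6},-\tfrac{s}{12},-\tfrac{s}{12})$. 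You instead keep the cruder chain exactly as it appears in the Proposition's proof and observe that, since both ends now vanish pointwise, every inequality saturates; saturation of the trace-free bound $|W^{+}_{g}|^{2}\geq\tfrac{3}{2}\alpha_{g}^{2}$ (a term not weighted by $|\nabla\omega|^{2}$) forces $\beta_{g}=\gamma_{g}=-\tfrac{\alpha_{g}}{2}$ outright, and the clash with equality in $\beta_{g}|\nabla\omega|^{2}\leq\tfrac{\alpha_{g}}{4}|\nabla\omega|^{2}$ then yields $\nabla\omega=0$ as a corollary. Your route is more self-contained (no appeal to the almost-K\"ahler lemma of [18]) and isolates cleanly where the degeneracy comes from; the paper's route makes the K\"ahler structure the primary output, which it needs anyway for the later applications. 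Both correctly handle $\det W^{+}$: positive on $X$ by the pinched spectrum, zero on $N$, and conformally invariant in sign.
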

\begin{proof}
If $(M, h)$ is anti-self-dual, $W^{+}$ is degenerate. 
 Suppose $(M, h)$ is not anti-self-dual and the extended line bundle $L$ of the top eigenspace of $\alpha_{h}$ on $M$ is trivial.  
 Let $\alpha_{h}:=\lambda_{3}$ and $g:=f^{-2}h$ on $X:=M-N$, where $N$ is the zero set of $W^{+}_{h}$ and $f=\alpha_{h}^{-1/3}$.
 Let $\omega$ be the eigenvector of $W^{+}_{g}$ with the eigenvalue $\alpha_{g}=f^{2}\alpha_{h}$.
 Then $\omega$ is a self-dual 2-form such that $W^{+}_{g}(\omega)=\alpha_{g}\omega, |\omega|_{g}=\sqrt{2}$.
 Then by Proposition 5, $d\omega=0$ on $X$. 
 From this, we have
 \[|W^{+}(\omega)|^{2}=\frac{1}{2}\left[W^{+}(\omega, \omega)\right]^{2}, \hspace{5pt} \alpha_{g}=\frac{1}{2}W^{+}(\omega, \omega).\]

 Since $\delta_{h} W^{+}_{h}=0$, we have $\delta_{g}(fW^{+}_{g})=0$ [18] and therefore, 
 \[0=\left<\delta_{g}(fW^{+}_{g}), \omega\otimes\omega\right>
 =\left<\nabla^{*}\nabla fW^{+}, \omega\otimes\omega\right>+\left(\frac{s}{2}W^{+}(\omega, \omega)-6|W^{+}(\omega)|^{2}+2|W^{+}|^{2}|\omega|^{2}\right)f,\]  
 with respect to $g$. 
 Then we have 
 \begin{align*}
&\left<\nabla^{*}\nabla(fW^{+}), \omega\otimes\omega\right>\\
&=-2(f\alpha_{g})\left<\omega, \nabla^{a}\nabla_{a}\omega\right>-2fW^{+}(\nabla^{a}\omega, \nabla_{a}\omega)\\
&=\left(W^{+}(\omega, \omega)|\nabla\omega|^{2}+\frac{1}{2}W^{+}(\omega, \omega)|\nabla\omega|^{2}\right)f\\
&=\frac{3}{2}W^{+}(\omega, \omega)|\nabla\omega|^{2}f.\\
\end{align*}
Here we used 
\[W^{+}(\nabla_{e}\omega, \nabla^{e}\omega)=-\frac{1}{4}|\nabla\omega|^{2}W^{+}(\omega, \omega)\]
from Lemma 1 in [18] for a four-dimensional almost-K\"ahler structure. 
Since $\delta=-*d*$, $\Delta\omega=0$. 
From the Weitzenb\"ock formula for a self-dual 2-form, 
\[\Delta\omega=\nabla^{*}\nabla\omega-2W^{+}(\omega)+\frac{s}{3}\omega,\]
 it follows that 
 \[\nabla^{*}\nabla\omega=2W^{+}(\omega)-\frac{s}{3}\omega.\]
 Since $|\omega|=\sqrt{2}$, using $-\frac{1}{2}\Delta|\omega|^{2}=\left<\nabla\omega, \nabla\omega\right>-\left<\nabla^{*}\nabla\omega, \omega\right>$, we have 
 \[|\nabla\omega|^{2}=2W^{+}(\omega, \omega)-\frac{2}{3}s.\]
 
Using
\[|W^{+}|^{2}\geq\frac{3}{2}\alpha^{2}_{g}=\frac{3}{8}(W^{+}(\omega, \omega))^{2},\]
and 
\[\frac{s}{2}=\frac{3}{2}W^{+}(\omega, \omega)-\frac{3}{4}|\nabla\omega|^{2},\]
on X, we have 
\begin{align*}
&0=\left(\frac{3}{2}W^{+}(\omega, \omega)|\nabla\omega|^{2}+\frac{s}{2}W^{+}(\omega, \omega)-6|W^{+}(\omega)|^{2}+2|W^{+}|^{2}|\omega|^{2}\right)f\\
&\geq\left(\frac{3}{2}W^{+}(\omega, \omega)|\nabla\omega|^{2}+\left[\frac{3}{2}W^{+}(\omega, \omega)-\frac{3}{4}|\nabla\omega|^{2}\right]W^{+}(\omega, \omega)
-\frac{3}{2}[W^{+}(\omega, \omega])]^{2}\right)f\\
&=\left(\frac{3}{4}W^{+}(\omega, \omega)|\nabla\omega|^{2}\right)f.
\end{align*}
 
Since $W^{+}(\omega, \omega)>0$ on $X$, 
$\nabla\omega=0$ on $X$.
Thus, $(X, g, \omega)$ is K\"ahler.
Then, $W^{+}_{g}$ is degenerate on $X$ and therefore, $W^{+}_{h}$ is degenerate on $X$. 
Since $W^{+}_{h}=0$ on $M-X$, we get $W^{+}_{h}$ is degenerate on $M$. 

Suppose the extended line bundle $L$ on $M$ is not trivial. 
Let $(\hat{M}, \hat{h})$ be the double cover of $M$ with the pull-back metric $\hat{h}$ of $h$. 
Then on $\hat{X}$, which is the pull-back of $X$, there exists the eigenvector of $W^{+}_{\hat{g}}$ with the eigenvalue $\alpha_{\hat{g}}$, 
where $\hat{g}$ is the pull-back metric of $g$ on $\hat{X}$. 
By the argument above, $(\hat{M}, \hat{h})$ has a degenerate spectrum of $W^{+}_{\hat{h}}$.
Therefore, $W^{+}$ has a degenerate spectrum. 
\end{proof}

\begin{Theorem}
(LeBrun [18]) Let $(M, g, \omega)$ be a compact connected almost-K\"ahler four-manifold with $W^{+}(\omega, \omega)\geq 0$. 
Suppose there exists $f>0$ such that $h:=f^{2}g$ has $\delta W^{+}=0$. 
Then either $g$ is K\"ahler with scalar curvature $s=c/f$ for some positive constant $c$, or $g$ is anti-self-dual. 
\end{Theorem}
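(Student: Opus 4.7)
The plan is to follow the divergence-testing strategy of Proposition 5 and Theorem 4, but with $\omega$ now a globally defined almost-K\"ahler form rather than an eigenform of $W^{+}$ extracted from $M-N$. First I would translate the conformal hypothesis: $\delta_{h}W^{+}_{h}=0$ on $h=f^{2}g$ is equivalent to $\delta_{g}(fW^{+}_{g})=0$ on $(M,g)$. Since $(g,\omega)$ is almost-K\"ahler, $\omega$ is closed, self-dual, and of constant length $\sqrt{2}$, hence also coclosed and therefore harmonic, so the Weitzenb\"ock formula for self-dual 2-forms gives $\nabla^{*}\nabla\omega = 2W^{+}(\omega)-\frac{s}{3}\omega$. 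Taking the inner product with $\omega$ and using $|\omega|^{2}=2$ then yields the scalar-curvature identity $s = 3W^{+}(\omega,\omega)-\frac{3}{2}|\nabla\omega|^{2}$.

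Next I would test $\delta_{g}(fW^{+}_{g})=0$ against $\omega\otimes\omega$ exactly as in Proposition 5 to obtain the pointwise identity
\[0 = \langle \nabla^{*}\nabla(fW^{+}),\,\omega\otimes\omega\rangle + f\left[\tfrac{s}{2}W^{+}(\omega,\omega) - 6|W^{+}(\omega)|^{2} + 2|W^{+}|^{2}|\omega|^{2}\right],\]
integrate over $M$, and integrate the first term by parts, using the product rule together with the Weitzenb\"ock expression for $\nabla^{*}\nabla\omega$ and the almost-K\"ahler identity $W^{+}(\nabla_{e}\omega,\nabla^{e}\omega) = -\tfrac{1}{4}|\nabla\omega|^{2}W^{+}(\omega,\omega)$ (Lemma 1 of [18]). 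After substituting the scalar-curvature identity from the first step, the integrated equation should collapse to
\[0 = \int_{M} f\left[4|W^{+}|^{2} - 2|W^{+}(\omega)|^{2} - \tfrac{1}{2}W^{+}(\omega,\omega)^{2} + \tfrac{3}{4}|\nabla\omega|^{2}W^{+}(\omega,\omega)\right]dV.\]

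Now each of the two summands in brackets is pointwise nonnegative: the last summand is $\geq 0$ by the hypothesis $W^{+}(\omega,\omega)\geq 0$, and a direct computation in an orthonormal basis of $\Lambda^{+}$ starting with $\omega/\sqrt{2}$, using that $W^{+}$ is symmetric and traceless, gives $4|W^{+}|^{2} - 2|W^{+}(\omega)|^{2} - \tfrac{1}{2}W^{+}(\omega,\omega)^{2} \geq 0$, with equality exactly when $\omega$ is an eigenform of $W^{+}$ and the other two eigenvalues coincide. With $f>0$, both summands must vanish pointwise: the first forces $\omega$ to be an eigenform of $W^{+}$ with spectrum $(\alpha,-\tfrac{\alpha}{2},-\tfrac{\alpha}{2})$ for $\alpha=\tfrac{1}{2}W^{+}(\omega,\omega)$, and the second gives $W^{+}(\omega,\omega)|\nabla\omega|^{2}\equiv 0$. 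A standard open-closed argument on connected $M$, combined with unique continuation for the elliptic system $\delta(fW^{+})=0$ applied on the interior of $\{W^{+}=0\}$ should this set have nonempty interior, then gives the dichotomy: either $\nabla\omega\equiv 0$ so $(g,\omega)$ is K\"ahler, or $W^{+}\equiv 0$ so $g$ is anti-self-dual. In the K\"ahler case, $W^{+}_{g}$ has the standard form $\frac{s}{6}\,\mathrm{diag}(2,-1,-1)$ with respect to $\omega/\sqrt{2}$; feeding this back into $\delta_{g}(fW^{+}_{g})=0$ reduces to $d(fs)=0$, whence $s=c/f$, with $c>0$ forced by $W^{+}(\omega,\omega)\geq 0$ whenever $g$ is not anti-self-dual.

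The main obstacle is the careful integration-by-parts bookkeeping in the middle paragraph: the cross-terms from $\nabla(fW^{+})$ paired with $\nabla\omega$ have to be reorganized, using both the Weitzenb\"ock substitution for $\nabla^{*}\nabla\omega$ and the almost-K\"ahler identity for $W^{+}(\nabla\omega,\nabla\omega)$, so that the final integrand splits exactly into the two manifestly nonnegative pieces above. The algebraic inequality $4|W^{+}|^{2} \geq 2|W^{+}(\omega)|^{2} + \tfrac{1}{2}W^{+}(\omega,\omega)^{2}$ provides the slack absent in Theorem 4, where $\omega$ was automatically an eigenform of $W^{+}$ and these algebraic terms vanished identically; here that slack must be earned, and it is exactly what allows the sign hypothesis $W^{+}(\omega,\omega)\geq 0$ to close the argument.
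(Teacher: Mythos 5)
The paper does not actually prove this statement: it is Theorem~4, quoted verbatim from LeBrun's article [18] and used as a black box, so there is no in-paper proof to compare against. Your proposal is, in substance, a correct reconstruction of LeBrun's original argument. The integration by parts is right: pairing the Weitzenb\"ock identity for $\delta_{g}(fW^{+}_{g})=0$ with $\omega\otimes\omega$, moving $\nabla^{*}\nabla$ onto $\omega\otimes\omega$, and substituting $\nabla^{*}\nabla\omega=2W^{+}(\omega)-\frac{s}{3}\omega$, the identity $W^{+}(\nabla_{e}\omega,\nabla^{e}\omega)=-\frac{1}{4}|\nabla\omega|^{2}W^{+}(\omega,\omega)$, and $s=3W^{+}(\omega,\omega)-\frac{3}{2}|\nabla\omega|^{2}$ does produce exactly the integrand $f\bigl[4|W^{+}|^{2}-2|W^{+}(\omega)|^{2}-\frac{1}{2}W^{+}(\omega,\omega)^{2}+\frac{3}{4}|\nabla\omega|^{2}W^{+}(\omega,\omega)\bigr]$, and your algebraic inequality checks out: writing $W^{+}$ in an orthonormal basis of $\Lambda^{+}$ beginning with $\omega/\sqrt{2}$ and using the trace-free condition, the first bracket equals $4W_{12}^{2}+4W_{13}^{2}+8W_{23}^{2}+2(W_{22}-W_{33})^{2}\geq 0$, with equality precisely when $\omega$ is an eigenform and the two remaining eigenvalues coincide. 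Your last paragraph also correctly identifies why this extra algebraic slack is needed here but not in the paper's Theorem~3, where $\omega$ is an eigenform by construction. Two small points. First, the endgame is the one loose spot: invoking ``unique continuation for $\delta(fW^{+})=0$'' is imprecise, since that equation is first order and not elliptic; what works is either unique continuation for the second-order Weitzenb\"ock system $\nabla^{*}\nabla(fW^{+})=\hbox{(terms vanishing with }W^{+})$, or, more cleanly (and this is LeBrun's route), the observation that on the open set $U=\{W^{+}\neq 0\}$ the metric is K\"ahler with $fs$ locally a positive constant, while $s=3W^{+}(\omega,\omega)\to 0$ at $\partial U$ and $f$ is bounded, forcing $\partial U=\emptyset$ and hence $U=M$ or $U=\emptyset$ by connectedness. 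Second, in the K\"ahler case the spectrum is $\frac{s}{12}\,\mathrm{diag}(2,-1,-1)$, not $\frac{s}{6}\,\mathrm{diag}(2,-1,-1)$ (so that $W^{+}(\omega,\omega)=\frac{s}{3}$); this factor does not affect the conclusion $d(fs)=0$.
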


LeBrun also showed that if $s^{*}\geq 0$, then $W^{+}(\omega, \omega)\geq 0$ [18].
If it is required that a certain conformal class with an assigned conformal factor has $\delta W^{+}=0$
and $\omega$ is an eigenvector of $W^{+}$ such that $W^{+}(\omega, \omega)\geq 0$, 
the proof of Theorem 3 shows that we get a result for a connected
almost-K\"ahler four-manifold which is not necessarily compact. 

\begin{Proposition}
Let $(M, g, \omega)$ be a connected almost-K\"ahler four-manifold. 
Suppose $W^{+}(\omega)=\alpha_{g}\omega$, where $\alpha_{g}>0$ and $\delta(fW^{+})=0$, where $f=\alpha^{-1}$. 
Then either $(M, g)$ is anti-self-dual or $(M, g, \omega)$ is K\"ahler with scalar curvature $s=c/f$ for a positive constant $c$. 
\end{Proposition}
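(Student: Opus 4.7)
The observation behind my plan is that the proof of Theorem 3 splits into two halves: Proposition 5 (which uses compactness and boundary integration on $X_\varepsilon$) to obtain the almost-K\"ahler condition $d\omega=0$, followed by a purely pointwise calculation that forces $\nabla\omega=0$. Here $(M,g,\omega)$ is already almost-K\"ahler, so only the second, pointwise half is required and compactness can be dropped.

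Concretely, I first observe that $\omega$ is harmonic: $d\omega=0$ by almost-K\"ahler, and $\delta\omega=-*d*\omega=-*d\omega=0$ since $*\omega=\omega$. The Weitzenb\"ock formula on self-dual 2-forms then yields $\nabla^{*}\nabla\omega=2\alpha_{g}\omega-\frac{s}{3}\omega$; pairing with $\omega$ and using that $|\omega|_{g}^{2}=2$ is constant gives the pointwise identity
\[
\frac{s}{2}=\frac{3}{2}W^{+}(\omega,\omega)-\frac{3}{4}|\nabla\omega|^{2}.
\]

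Next I exploit the hypothesis $\delta(fW^{+})=0$ exactly as in the proof of Theorem 3: pair with $\omega\otimes\omega$, expand $\langle\nabla^{*}\nabla(fW^{+}),\omega\otimes\omega\rangle$ via Lemma 4 together with the almost-K\"ahler identity $W^{+}(\nabla_{e}\omega,\nabla^{e}\omega)=-\frac{1}{4}|\nabla\omega|^{2}W^{+}(\omega,\omega)$ (Lemma 1 of [18]), and insert $|W^{+}(\omega)|^{2}=\frac{1}{2}[W^{+}(\omega,\omega)]^{2}$ (eigenvector condition) together with $|W^{+}|^{2}\geq\frac{3}{2}\alpha_{g}^{2}$, which holds for \emph{any} eigenvalue of the trace-free tensor $W^{+}$ since $\mu_{1}^{2}+\mu_{2}^{2}\geq\frac{1}{2}(\mu_{1}+\mu_{2})^{2}=\frac{1}{2}\alpha_{g}^{2}$ controls the other two eigenvalues. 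Substituting the $s/2$-identity above then collapses all the bracketed terms to $-\frac{3}{2}[W^{+}(\omega,\omega)]^{2}f$, leaving
\[
0\geq\frac{3}{4}W^{+}(\omega,\omega)|\nabla\omega|^{2}f.
\]
Since $W^{+}(\omega,\omega)=2\alpha_{g}>0$ and $f>0$ everywhere on $M$, this forces $\nabla\omega\equiv 0$, so $(M,g,\omega)$ is K\"ahler. Feeding $|\nabla\omega|=0$ back into the $s/2$-identity gives $s=3W^{+}(\omega,\omega)=6\alpha_{g}=6/f$, i.e.\ $s=c/f$ with $c=6$.

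The main hurdle is really only bookkeeping: one must verify that every algebraic manipulation past Proposition 5 in Theorem 3 is strictly pointwise, so that once $d\omega=0$ is supplied by the almost-K\"ahler hypothesis the remainder transfers verbatim. The ``$(M,g)$ anti-self-dual'' alternative stated in the conclusion is vacuous under the strict hypothesis $\alpha_{g}>0$ and is retained only for parallelism with Theorem 3, where the weaker assumption $W^{+}(\omega,\omega)\geq 0$ genuinely admits that case.
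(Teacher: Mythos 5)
Your proposal is correct and takes essentially the same route as the paper: the paper offers no separate proof of this Proposition, but justifies it by the remark that the pointwise half of the proof of Theorem 3 (everything after Proposition 5 supplies $d\omega=0$) transfers to the non-compact connected setting, which is exactly the argument you spell out, including the correct identification $s=6\alpha_{g}=6/f$. The only cosmetic difference is that you additionally note the anti-self-dual alternative is vacuous when $\alpha_{g}>0$, which is a fair observation and not a gap.
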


Let $(M, g)$ be a compact, oriented riemannain four-manifold. Then by [2], [24], the conformal class of $g$
contains a metric of constant scalar curvature.
Moreover, if two metrics with fixed signs of the scalar curvature are conformally equivalent, then signs of the scalar curvature are the same [17]. 
The sign of the constant scalar curvature is called the type of $[g]$. 
Theorem 5 improves the main result in [15]. 

\begin{Theorem}
Let $(M, h)$ be a compact oriented four-manifold with harmonic self-dual Weyl curvature. 
Suppose $\lambda_{2}\leq\frac{\lambda_{3}}{4}$, where $\lambda_{i}$ are the eigenvalues of $W^{+}$ such that $\lambda_{1}\leq\lambda_{2}\leq\lambda_{3}$
and $(M, h)$ is nonpositive type. Then $(M, h)$ is anti-self-dual. 
\end{Theorem}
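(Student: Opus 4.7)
The plan is to argue by contradiction, assuming $(M,h)$ is not anti-self-dual. First, applying Theorem 3 yields that $W^+$ has degenerate spectrum with $\det W^+\geq 0$. Combining this with the pinching $\lambda_2\leq\lambda_3/4$ rules out the degeneration $\lambda_2=\lambda_3$ wherever $W^+\neq 0$ (since $\lambda_2=\lambda_3\leq\lambda_3/4$ would force $\lambda_3\leq 0$, hence $\lambda_3=0$ and a zero of $W^+$); thus on $X:=M\setminus N$, where $N$ is the zero set of $W^+$, one has $\lambda_1=\lambda_2=-\alpha_h/2<0<\lambda_3=\alpha_h$ with $\alpha_h$ simple and smooth. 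The top eigenspace is a smooth line bundle on $X$ and extends across $N$ to a line bundle on $M$ since $N$ has codimension $\geq 2$ by Theorem 2; passing to the double cover if necessary (as in the proof of Theorem 3), I may assume this bundle is trivial.

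Next, I follow the construction of Proposition 5: set $g:=\alpha_h^{2/3}h$ on $X$ and let $\omega$ be the eigenform of $W^+_g$ with $|\omega|_g^2=2$ corresponding to the top eigenvalue $\alpha_g=\alpha_h^{1/3}$. Proposition 5 gives $d\omega=0$, so $(X,g,\omega)$ is a connected almost-K\"ahler four-manifold, and by construction $\delta_g(fW^+_g)=0$ for $f=\alpha_g^{-1}$. Proposition 6 now furnishes two alternatives: either $(X,g)$ is anti-self-dual, forcing $W^+_h\equiv 0$ on $M$ (contradicting the standing assumption), or $(X,g,\omega)$ is K\"ahler with $s_g=c\alpha_g>0$ for a positive constant $c$. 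Accepting the second alternative and writing $u:=\alpha_h^{1/3}$ (so that $g=u^2 h$), the conformal transformation law for scalar curvature yields the pointwise identity
\[
L_h u \;=\; c\,u^4 \quad \text{on } X,
\]
where $L_h:=-6\Delta_h+s_h$ is the conformal Laplacian of $h$. Moreover, $u$ is smooth and positive on $X$ and extends continuously to $M$ by $u|_N=0$.

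To reach the contradiction I will use that the nonpositive type hypothesis is equivalent to $\lambda_1(L_h)\leq 0$, where $\lambda_1(L_h)$ is the smallest eigenvalue of $L_h$ on $M$ and admits a strictly positive eigenfunction $\phi_1>0$. Applying Green's identity on the exhaustion $X_\varepsilon:=\{x\in M:\mathrm{dist}_h(x,N)\geq\varepsilon\}$ in the manner of the proof of Proposition 5 and letting $\varepsilon\to 0$ so that the boundary contributions along $\partial X_\varepsilon$ vanish, I obtain
\[
\lambda_1(L_h)\int_M u\phi_1\,d\mu_h
 \;=\; \int_M u\,L_h\phi_1\,d\mu_h
 \;=\; \int_M (L_h u)\phi_1\,d\mu_h
 \;=\; c\int_X u^4\phi_1\,d\mu_h \;>\; 0.
\]
Since $u\phi_1\geq 0$ with strict positivity on the open dense set $X$, one has $\int_M u\phi_1>0$, forcing $\lambda_1(L_h)>0$. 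This contradicts the nonpositive type assumption and completes the proof.

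The main obstacle is the analytic justification that the boundary integrals $\int_{\partial X_\varepsilon}(\phi_1\,\partial_\nu u-u\,\partial_\nu \phi_1)$ vanish as $\varepsilon\to 0$. The factor $u$ tends to zero along $\partial X_\varepsilon$, but $\partial_\nu u=\tfrac{1}{3}\alpha_h^{-2/3}\partial_\nu\alpha_h$ may blow up; the desired estimate follows from the polynomial vanishing of $\alpha_h$ at $N$ (a consequence of $\delta W^+=0$ and unique continuation) combined with $\mathrm{Vol}(\partial X_\varepsilon)=O(\varepsilon)$ from $\mathrm{codim}\,N\geq 2$. This is essentially parallel to the limiting calculation already executed inside the proof of Proposition 5, so the same bookkeeping should carry over with only minor modifications.
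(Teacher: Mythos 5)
Your proposal follows the paper's strategy up to the key reduction: assume $(M,h)$ is not anti-self-dual, use Theorem 3 (via Propositions 5 and 6) to produce on $X=M\setminus N$ a K\"ahler metric $g=u^{2}h$, $u=\alpha_{h}^{1/3}$, with $s_{g}>0$, and then play this off against the nonpositive type of $[h]$ by an integration-by-parts argument across the codimension-$\geq 2$ set $N$. Where you diverge is the final step. The paper does \emph{not} use the first eigenfunction of the conformal Laplacian; it passes to the constant-scalar-curvature representative $h'\in[h]$ with $s_{h'}\leq 0$, writes $g=(f')^{2}h'$ with $f'=(f''f)^{-1}$ positive on $X$, reads off $6\Delta_{h'}f'=s_{g}(f')^{3}-s_{h'}f'>0$ from the conformal transformation law, and then invokes its Proposition 7: on a compact manifold there is no positive function $\Phi$ with $\Delta\Phi>0$ on the complement of a nonempty set of codimension $\geq 2$. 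The crucial feature of Proposition 7 is that the test function is $\Phi^{-1}$, so the boundary term $\int_{\partial X_{\varepsilon}}\Phi^{-1}\partial_{\nu}\Phi$ is controlled by Cauchy--Schwarz against the interior integral $\int\Phi^{-2}|d\Phi|^{2}$, which already appears on the other side of the inequality; the argument then closes using only $\mathrm{Vol}(\partial X_{\varepsilon})=\mathcal{O}(\varepsilon)$ and requires \emph{no} information about the rate of vanishing of $\Phi$ or the size of $\partial_{\nu}\Phi$ near $N$.

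This is exactly where your version has a genuine gap. Your Green's identity against $\phi_{1}$ produces the boundary term $\int_{\partial X_{\varepsilon}}\phi_{1}\,\partial_{\nu}u$ with $\partial_{\nu}u=\tfrac13\alpha_{h}^{-2/3}\partial_{\nu}\alpha_{h}$, and to kill it you need, in effect, a two-sided estimate $\alpha_{h}\gtrsim d(\cdot,N)^{k}$ together with $|\nabla\alpha_{h}|\lesssim \alpha_{h}/d(\cdot,N)$ (or $\lesssim d^{k-1}$) uniformly along $N$. You assert this follows from ``polynomial vanishing of $\alpha_{h}$'' via unique continuation, but unique continuation for the elliptic system satisfied by $W^{+}$ gives finite vanishing order pointwise, not a uniform lower bound on $\alpha_{h}=\lambda_{3}$ near the whole zero set, nor a gradient bound of the required form for the largest eigenvalue of the tensor; none of this is established in the paper, and it is not routine bookkeeping. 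So while your route would work if that estimate were supplied, as written the decisive limit $\int_{\partial X_{\varepsilon}}\phi_{1}\partial_{\nu}u\to 0$ is unproved, and the paper's detour through the constant-scalar-curvature metric and the $\Phi^{-1}$-weighted identity of Proposition 7 exists precisely to avoid having to prove it. (A secondary, fixable point: your reduction to the trivial-line-bundle case by passing to the double cover should note explicitly that nonpositive type is inherited by the cover, since the pull-back of $h'$ still has constant nonpositive scalar curvature.)
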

\begin{proof}
Suppose $(M, h)$ is not anti-self-dual. By Theorem 3, $(M, h)$ is conformal to a K\"ahler metric with positive scalar curvature on $X:=M-N$, 
where $N$ is the nonempty zero set of $W^{+}$. 
Since $(M, h)$ is nonpositive type, there is a metric $h'\in[h]$ such that the scalar curvature of $h'$ is negative or zero. 
Then $h'=(f'')^{2}h$ for a smooth function $f''$ on $M$. Since $h=f^{2}g$, where $f=\alpha_{h}^{-1/3}$, 
we have $h'=(f''f)^{2}g$ on $X$. Thus, $g=(f')^{2}h'$, where $f'=(f''f)^{-1}$ is a positive smooth function on $X$. 
Then we have 
\[s_{g}f'^{3}=(6\Delta_{h'}+s_{h'})f'.\]
Since $s_{h'}\leq0$ and $s_{g}>0$, we have $\Delta f'>0$ on $(X, h')$. 
By Proposition 7, we get the result. 
\end{proof}

\begin{Proposition}
Let $(M, g)$ be a compact riemannian manifold. 
Suppose $N$ is a nonempty set with codimension $\geq 2$. 
Then there does not exist a positive strictly superharmonic function on $X:=M-N$.
\end{Proposition}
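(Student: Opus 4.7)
The plan is to argue by contradiction. Assume $u$ is a smooth positive function on $X = M - N$ with $\Delta u < 0$ pointwise on $X$ (using the analyst's convention $\Delta = \mathrm{div}\,\nabla$, so that the superharmonic condition reads $\Delta u \leq 0$). The strategy combines a capacity-type cutoff exploiting the codimension hypothesis on $N$ with an integration-by-parts identity obtained by testing the positive function $-\Delta u / u$ against $\eta_{\varepsilon}^{2}$.

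First I would construct a family of Lipschitz cutoffs $\{\eta_{\varepsilon}\}_{\varepsilon > 0}$ on $M$ with $0 \leq \eta_{\varepsilon} \leq 1$, vanishing on an open neighborhood of $N$, converging to $1$ pointwise on $X$, and satisfying
\[
\int_{M} |\nabla \eta_{\varepsilon}|^{2} \, d\mu_{g} \longrightarrow 0 \quad \text{as } \varepsilon \to 0.
\]
This is the standard statement that a closed subset of codimension at least $2$ has vanishing $H^{1}$-capacity. A concrete model is a logarithmic truncation of $\mathrm{dist}_{g}(\cdot, N)$, for which the coarea formula together with the codimension-$2$ bound $\mathrm{Vol}_{g}\{x : \mathrm{dist}_{g}(x, N) \leq r\} = \mathcal{O}(r^{2})$ furnishes the desired decay of the Dirichlet energy.

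Second, since $\eta_{\varepsilon}^{2} / u$ is smooth on $M$ and supported away from $N$, integration by parts on the compact manifold $M$ is legitimate and yields
\[
\int_{M} \eta_{\varepsilon}^{2} \frac{-\Delta u}{u} \, d\mu_{g} \;=\; \int_{M} \left( \frac{2\eta_{\varepsilon}}{u} \nabla u \cdot \nabla \eta_{\varepsilon} - \frac{\eta_{\varepsilon}^{2} |\nabla u|^{2}}{u^{2}} \right) d\mu_{g} \;\leq\; \int_{M} |\nabla \eta_{\varepsilon}|^{2} \, d\mu_{g},
\]
where the final bound uses the elementary inequality $2ab \leq a^{2} + b^{2}$ with $a = \eta_{\varepsilon} |\nabla u| / u$ and $b = |\nabla \eta_{\varepsilon}|$. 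To close the argument, fix any compact $K \subset X$ of positive volume, available because $N$ has measure zero. On $K$ the smooth positive function $-\Delta u / u$ attains a positive minimum $c_{K} > 0$, and $\eta_{\varepsilon} \equiv 1$ on $K$ for all sufficiently small $\varepsilon$, so the left side of the displayed inequality is bounded below by $c_{K}\,\mathrm{Vol}_{g}(K) > 0$ while the right side vanishes in the limit, a contradiction.

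The main technical obstacle is the construction of $\eta_{\varepsilon}$ with controlled Dirichlet energy when $N$ is an arbitrary closed set of codimension $\geq 2$ rather than a smooth submanifold; this is a standard but nontrivial capacity argument. Once the capacity estimate is in place, the rest of the argument is a one-line integration by parts combined with Cauchy--Schwarz.
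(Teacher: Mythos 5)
Your argument is correct, and it reaches the conclusion by a genuinely different mechanism than the paper. The paper exhausts $X$ by the domains $X_{\varepsilon}$ (complements of $\varepsilon$-tubes around $N$), integrates $\Phi^{-1}\Delta\Phi$ by parts on each, and then must control the resulting boundary term $\int_{\partial X_{\varepsilon}}\Phi^{-1}\partial_{\nu}\Phi\,da$ via Cauchy--Schwarz together with the area bound $\mathrm{Vol}(\partial X_{\varepsilon})=\mathcal{O}(\varepsilon)$ and a comparison between the bulk integral $\int_{X_{\varepsilon}}\Phi^{-2}|d\Phi|^{2}d\mu$ and the boundary integral $\int_{\partial X_{\varepsilon}}\Phi^{-2}(\partial_{\nu}\Phi)^{2}da$; the contradiction is then extracted indirectly, by first deducing $d\Phi\equiv 0$ and then contradicting strict superharmonicity. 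You instead package the codimension hypothesis as the statement that $N$ has vanishing $H^{1}$-capacity, test $-\Delta u/u$ against $\eta_{\varepsilon}^{2}$, and absorb the cross term by $2ab\leq a^{2}+b^{2}$, so that no boundary integrals of $\nabla u$ ever appear and the contradiction is immediate ($0<c_{K}\mathrm{Vol}(K)\leq\lim\int|\nabla\eta_{\varepsilon}|^{2}=0$). What your route buys is robustness: the only input from the geometry of $N$ is the tube-volume bound $\mathrm{Vol}\{d(\cdot,N)\leq r\}=\mathcal{O}(r^{2})$, which is exactly the same information the paper uses (its $\mathcal{O}(\varepsilon)$ boundary-area bound is the derivative of this), and you never need to bound a codimension-one integral of $|\nabla u|$ by a bulk integral --- the most delicate step in the paper's version. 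The cost is that you must carry out the logarithmic-cutoff capacity construction honestly for a general closed set satisfying the tube estimate; you correctly identify this as the one nontrivial ingredient, and the dyadic/coarea argument you sketch does yield $\int|\nabla\eta_{\varepsilon}|^{2}=\mathcal{O}(1/\log(1/\varepsilon))\to 0$. Do make explicit that $N$ is assumed closed (so that $X$ is open, $\mathrm{dist}(\cdot,N)$ is Lipschitz, and a compact $K\subset X$ of positive volume exists); this is implicit in the paper as well, since there $N$ is the zero set of $W^{+}$.
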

\begin{proof}
 Let $X_{\varepsilon}=\{x\in M|dist_{g}(x, N)\leq\varepsilon\}$.
Suppose there is a strictly superharmonic function $\Phi$ on $X$.
Then we have 
\[\int_{X_{\varepsilon}}\Phi^{-1}\Delta\Phi d\mu=\int_{X_{\varepsilon}}\left<d\Phi^{-1}, d\Phi\right>d\mu
-\int_{\partial X_{\varepsilon}}\Phi^{-1}\frac{\partial\Phi}{\partial\nu}da>0.\]
From this, we get 
\[-\int_{\partial X_{\varepsilon}}\Phi^{-1}\frac{\partial\Phi}{\partial\nu}>\int_{X_{\varepsilon}}\Phi^{-2}|d\Phi|^{2}d\mu.\]
Fix $\varepsilon>0$. 
Then we have 
\[\int_{\partial X_{\varepsilon}}\Phi^{-2}\left(\frac{\partial\Phi}{\partial\nu}\right)^{2}da\leq C_{\varepsilon},\]
where $C_{\varepsilon}$ is a positive constant which depends on $\varepsilon$. 
We note that $\int_{\partial X_{\varepsilon}}da=\mathcal{O}(\varepsilon)$ since $N$ is codimension $\geq 2$. In particular, it is bounded. 
By the Cauchy-Schwarz inequality, we get
\[\left(\int_{\partial X_{\varepsilon}}da\right)^{1/2}\left(\int_{\partial X_{\varepsilon}}\Phi^{-2}\left(\frac{\partial\Phi}{\partial\nu}\right)^{2}da\right)^{1/2}
\geq \left|\int_{\partial X_{\varepsilon}}\Phi^{-1}\frac{\partial\Phi}{\partial\nu}da\right|\]
\[>\int_{X_{\varepsilon}}\Phi^{-2}|d\Phi|^{2}d\mu\geq\int_{\partial X_{\varepsilon}}\Phi^{-2}\left(\frac{\partial\Phi}{\partial\nu}\right)^{2}da.\]
Thus, we have 
\[\left(\int_{\partial X_{\varepsilon}}da\right)>\int_{\partial X_{\varepsilon}}\Phi^{-2}\left(\frac{\partial\Phi}{\partial\nu}\right)^{2}da.\]
In particular, $\int_{\partial X_{\varepsilon}}\Phi^{-2}\left(\frac{\partial\Phi}{\partial\nu}\right)^{2}da$ is bounded. 
Then we get
\[0=\lim_{\varepsilon\to 0}\left(\int_{\partial X_{\varepsilon}}da\right)^{1/2}
\lim_{\varepsilon\to 0}\left(\int_{\partial X_{\varepsilon}}\Phi^{-2}\left(\frac{\partial\Phi}{\partial\nu}\right)^{2}da\right)^{1/2}
\geq \lim_{\varepsilon\to 0}\int_{X_{\varepsilon}}\Phi^{-2}|d\Phi|^{2}d\mu\geq 0.\]
Thus, $\Phi^{-2}d\Phi=0$ on $X$. Since $\Phi^{-1}>0$, we get $d\Phi=0$ on $X$, which is a contradiction since $\Phi$ is strictly superharmonic on $X$.

\end{proof}

\begin{Proposition}
Let $(M, g)$ be a compact  riemannian four-manifold. 
Suppose $N$ is a nonempty set with codimension $\geq 2$. 
Then there does not exist a positive bounded strictly subharmonic function on $X:=M-N$. 
\end{Proposition}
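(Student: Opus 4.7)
The plan is to reduce the subharmonic case directly to the superharmonic case already established in Proposition 8. Suppose, for contradiction, that $\Phi$ is a positive bounded strictly subharmonic function on $X := M - N$, so $\Delta\Phi < 0$ in the paper's sign convention and $0 < \Phi \leq C$ for some constant $C > 0$. Set $\Psi := C + 1 - \Phi$. Then $\Psi \geq 1 > 0$ everywhere on $X$, so $\Psi$ is positive, and $\Delta\Psi = -\Delta\Phi > 0$, so $\Psi$ is strictly superharmonic on $X$. Proposition 8 rules out the existence of any such $\Psi$ on $X = M - N$ when $N$ is nonempty and has codimension at least two in the compact manifold $M$, which is the desired contradiction.

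If one prefers a proof modelled directly on that of Proposition 8, one can instead use $\Phi$ itself as the test function in place of $\Phi^{-1}$. Integration by parts on the shrinking tube $X_\varepsilon = \{x \in M : \mathrm{dist}_g(x, N) \leq \varepsilon\}$ gives
\begin{equation*}
0 > \int_{X_\varepsilon}\Phi\,\Delta\Phi\,d\mu = \int_{X_\varepsilon}|d\Phi|^2\,d\mu - \int_{\partial X_\varepsilon}\Phi\,\frac{\partial\Phi}{\partial\nu}\,da,
\end{equation*}
so the boundary integral dominates the Dirichlet integral on $X_\varepsilon$. Cauchy--Schwarz on $\partial X_\varepsilon$, combined with the bound $\int_{\partial X_\varepsilon}\Phi^2\,da \leq (\sup_X\Phi)^2\,\mathrm{Vol}_g(\partial X_\varepsilon) = \mathcal{O}(\varepsilon)$ (using both the boundedness hypothesis on $\Phi$ and the codimension-two hypothesis on $N$) together with finiteness of $\int_{\partial X_\varepsilon}(\partial\Phi/\partial\nu)^2\,da$ for each fixed $\varepsilon > 0$, would then force the boundary term, and hence the Dirichlet integral, to tend to zero as $\varepsilon \to 0$, yielding $d\Phi = 0$ on $X$ and contradicting strict subharmonicity.

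The only genuine role of the boundedness hypothesis is to give the $\mathcal{O}(\varepsilon)$ estimate on $\int_{\partial X_\varepsilon}\Phi^2\,da$; without it the naive analogue of Proposition 8 for subharmonic functions would fail. The reduction via $\Psi := C + 1 - \Phi$ is simply the cleanest way to transport this boundedness into the affine shift that converts a strict sub- into a strict superharmonic function, so I expect no technical obstacle beyond that already handled in Proposition 8.
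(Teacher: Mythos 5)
Your primary argument is correct, and it takes a genuinely different route from the paper. The paper proves the statement from scratch: it reruns the integration-by-parts machinery of Proposition 8 with the bounded function $f$ itself as the multiplier (in place of $\Phi^{-1}$), estimates $\int_{\partial X_{\varepsilon}}|f|^{2}\,da=\mathcal{O}(\varepsilon)$ using boundedness and the codimension hypothesis, extracts a uniform bound on $\int_{\partial X_{\varepsilon}}|\partial f/\partial\nu|^{2}\,da$ from the resulting chain of inequalities, and concludes $df=0$ via Cauchy--Schwarz. Your reduction via $\Psi:=C+1-\Phi$ short-circuits all of this: it makes Proposition 9 an immediate corollary of Proposition 8, and it isolates exactly where the boundedness hypothesis enters (it is needed precisely so that the affine flip produces a \emph{positive} superharmonic function). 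Since Proposition 8 imposes no boundedness on its superharmonic function, the reduction is legitimate as stated, and it is arguably cleaner than the paper's argument because the analytic work is done only once.

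One caveat on your second, ``direct'' sketch, which is essentially the paper's proof: the statement that $\int_{\partial X_{\varepsilon}}(\partial\Phi/\partial\nu)^{2}\,da$ is finite \emph{for each fixed} $\varepsilon$ is not by itself enough to force the product $\bigl(\int_{\partial X_{\varepsilon}}\Phi^{2}\,da\bigr)^{1/2}\bigl(\int_{\partial X_{\varepsilon}}(\partial\Phi/\partial\nu)^{2}\,da\bigr)^{1/2}$ to vanish as $\varepsilon\to 0$; a priori the second factor could blow up faster than the first decays. The paper closes this gap by chaining the inequalities so that $\int_{\partial X_{\varepsilon}}|\partial f/\partial\nu|^{2}\,da<\int_{\partial X_{\varepsilon}}|f|^{2}\,da\leq C$, i.e.\ by deducing a bound \emph{uniform in} $\varepsilon$ before passing to the limit. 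If you intend the direct version to stand on its own, you need that uniform bound (or some substitute for it); as written, your first argument is the one that is complete.
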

\begin{proof}
Let $X_{\varepsilon}=\{x\in M|dist_{g}(x, N)\leq\varepsilon\}$.
Suppose $f$ is a positive bounded subharmonic function on $X$. 
Since $\Delta f<0$ and, 
\[\int_{X_{\varepsilon}}f\Delta fd\mu=\int_{\partial X_{\varepsilon}}f\frac{\partial f}{\partial\nu}da-\int_{X_{\varepsilon}}(df, df)d\mu.\]
we get 
\[\int_{\partial X_{\varepsilon}}f\frac{\partial f}{\partial\nu}da>\int_{X_{\varepsilon}}(df, df)d\mu.\]
Since $f$ is bounded and $N$ is codimension $\geq 2$, we have $\int_{\partial X_{\varepsilon}}|f|^{2}da=\mathcal{O}(\varepsilon)$. 
In particular, $\int_{\partial X_{\varepsilon}}|f|^{2}da\leq C$, where $C$ is a positive constant. 
For fixed $\varepsilon$, 
$\int_{\partial X_{\varepsilon}}\left|\frac{\partial f}{\partial\nu}\right|^{2}da \leq C_{\varepsilon}$, 
where $C_{\varepsilon}$ is a constant which depends on $\varepsilon$. 
Then by the Cauchy-Schwarz inequality, we have 
\[\left(\int_{\partial X_{\varepsilon}}|f|^{2}da\right)^{1/2}\left(\int_{\partial X_{\varepsilon}}\left|\frac{\partial f}{\partial\nu}\right|^{2}da\right)^{1/2}
\geq\left|\int_{\partial X_{\varepsilon}}f\frac{\partial f}{\partial\nu}da\right|\]
Then we have 
\[\left(\int_{\partial X_{\varepsilon}}|f|^{2}da\right)^{1/2}\left(\int_{\partial X_{\varepsilon}}\left|\frac{\partial f}{\partial\nu}\right|^{2}da\right)^{1/2}
\geq\left|\int_{\partial X_{\varepsilon}}f\frac{\partial f}{\partial\nu}da\right|\]
\[>\int_{X_{\varepsilon}}\left(df, df\right)d\mu\geq\int_{\partial X_{\varepsilon}}\left|\frac{\partial f}{\partial\nu}\right|^{2}da.\]
From this, we get
\[\int_{\partial X_{\varepsilon}}\left|\frac{\partial f}{\partial\nu}\right|^{2}da<\int_{\partial X_{\varepsilon}}|f|^{2}da\leq C.\]
Then we get 
\[0=\lim_{\varepsilon\to 0}\left(\int_{\partial X_{\varepsilon}}|f|^{2}da\right)^{1/2}
\lim_{\varepsilon\to 0}\left(\int_{\partial X_{\varepsilon}}\left|\frac{\partial f}{\partial\nu}\right|^{2}da\right)^{1/2}\]
\[\geq\lim_{\varepsilon\to 0}\left|\int_{\partial X_{\varepsilon}}f\frac{\partial f}{\partial\nu}da\right|
\geq \lim_{\varepsilon\to 0}\int_{X_{\varepsilon}}\left(df, df\right)d\mu\geq 0.\]
Thus, $df=0$ on $X$, which is a contradiction since $f$ is strictly subharmonic on $X$. 
\end{proof}

Using Theorem 3, Theorem 16 in [15] can be improved in the following way. 

\begin{Theorem}
Let $(M, g)$ be a compact oriented four-manifold with harmonic self-dual Weyl curvature. 
Suppose $\lambda_{2}\leq\frac{\lambda_{3}}{4}$, where $\lambda_{i}$ are the eigenvalues of $W^{+}$ such that $\lambda_{1}\leq\lambda_{2}\leq\lambda_{3}$. 
If $\lambda_{1}\leq-\frac{s}{12}$ or $\frac{s}{6}\leq\lambda_{3}$, then
\begin{itemize}
\item $(M, g)$ is anti-self-dual; or
\item $b_{+}(M)=1$ and $(M, g)$ is K\"ahler with positive constant scalar curvature; or
\item $b_{+}(M)=0$ and the double cover of $(M, g)$ is K\"ahler metric with positive constant scalar curvature. 
\end{itemize}
\end{Theorem}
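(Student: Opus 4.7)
The plan is to assume $(M,g)$ is not anti-self-dual and show that one of conclusions (ii) or (iii) must hold. By Theorem 3, the hypotheses $\delta W^{+}=0$ and $\lambda_{2}\le\lambda_{3}/4$ already force $W^{+}$ to have degenerate spectrum with $\det W^{+}\ge 0$, and by Theorem 2 the zero set $N$ of $W^{+}$ has codimension $\ge 2$. On $X:=M-N$ the degenerate spectrum combined with $\lambda_{2}\le\lambda_{3}/4$ gives $\lambda_{1}=\lambda_{2}=-\lambda_{3}/2$, so $\alpha_{g}:=\lambda_{3}$ is simple and smooth there, and the construction in the proof of Theorem 3 produces a conformally related K\"ahler metric $\tilde g:=\alpha_{g}^{2/3}\,g$ on $X$, satisfying $\alpha_{\tilde g}=\alpha_{g}^{1/3}$ and $s_{\tilde g}=6\alpha_{g}^{1/3}>0$. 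I split the argument into two cases according to whether $N$ is empty.

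\textbf{Case 1:} $W^{+}$ is nowhere zero. Then $N=\emptyset$, and by Theorem 1 the metric $\tilde g$ extends to a K\"ahler metric on all of $M$ (or on its double cover in the $b_{+}=0$ alternative). Applying the conformal transformation law for the scalar curvature to $g=\alpha_{g}^{-2/3}\tilde g$ yields
\[
\Delta_{\tilde g}\,\alpha_{g}^{-1/3}=\frac{s_{g}-6\alpha_{g}}{6\alpha_{g}}.
\]
Under the degenerate spectrum both forms of the pinching ($\lambda_{3}\ge s_{g}/6$ and $\lambda_{1}\le -s_{g}/12$) reduce to $6\alpha_{g}\ge s_{g}$, so $\Delta_{\tilde g}\alpha_{g}^{-1/3}\le 0$. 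Since $\int\Delta_{\tilde g}u\,d\mu_{\tilde g}=0$ on the closed manifold where $\tilde g$ is defined, this forces $\Delta_{\tilde g}\alpha_{g}^{-1/3}\equiv 0$, hence $\alpha_{g}$ is a positive constant. Then $g$ and $\tilde g$ differ by a constant factor, so $g$ itself (or its double cover) is K\"ahler with positive constant scalar curvature, giving conclusion (ii) or (iii).

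\textbf{Case 2:} $N\ne\emptyset$. The reverse conformal computation gives on $X$
\[
\Delta_{g}\,\alpha_{g}^{1/3}=\alpha_{g}^{1/3}\bigl(\alpha_{g}-s_{g}/6\bigr),
\]
so the pinching makes $\alpha_{g}^{1/3}$ a positive function on $X$ with $\Delta_{g}\alpha_{g}^{1/3}\ge 0$, i.e.\ superharmonic in the sense of Proposition 8. Re-reading the proof of Proposition 8 shows that its Cauchy--Schwarz and boundary-volume argument already yields $d\Phi=0$ on $X$ from non-strict superharmonicity alone; strictness is invoked only in the final contradiction step. Thus $\alpha_{g}^{1/3}$ is constant on the connected set $X$, and by continuity of $\alpha_{g}$ on $M$ with $\alpha_{g}|_{N}=0$ this constant must be zero. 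But then $W^{+}\equiv 0$, contradicting the standing assumption. Hence Case 2 is impossible, reducing the theorem to Case 1.

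The main obstacle will be the careful rereading of Proposition 8's proof to confirm that $d\Phi=0$ really follows without strict superharmonicity; the remainder is standard conformal book-keeping and the maximum principle on closed manifolds. One also uses (built into LeBrun's proof [20]) that the K\"ahler metric produced by Theorem 1 is exactly $\alpha_{g}^{2/3}g$ up to rescaling, so that the same function $\alpha_{g}^{-1/3}$ that enters the construction is the one to which the maximum principle is applied in Case 1.
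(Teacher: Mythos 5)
Your argument is correct in outline but follows a genuinely different route from the paper. The paper's proof is very short: after invoking Theorem~3 to get $\lambda_{1}=\lambda_{2}=-\lambda_{3}/2$, it observes that both pinching hypotheses collapse to the single pointwise inequality $2\lambda_{2}^{2}+4\lambda_{1}\lambda_{3}-\tfrac{\lambda_{2}s}{2}=-6\lambda_{1}\left(\lambda_{1}+\tfrac{s}{12}\right)\leq 0$ (this quantity is precisely the negative of the zeroth-order term $\tfrac{s}{2}\lambda_{2}-6\lambda_{2}^{2}+2|W^{+}|^{2}$ in the Weitzenb\"ock formula for $\delta W^{+}=0$ paired with the middle eigenvector), and then defers entirely to the argument of Theorem~16 in [15], which runs a superharmonicity argument on the middle eigenvalue. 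You instead work with the top eigenvalue $\alpha_{g}$ and the explicit conformal K\"ahler metric $\alpha_{g}^{2/3}g$: the pinching becomes $6\alpha_{g}\geq s_{g}$, which via the Yamabe-type identity $s_{\tilde g}u^{3}=(6\Delta+s_{g})u$ makes $\alpha_{g}^{-1/3}$ subharmonic for the K\"ahler metric (forcing, on a closed manifold, $\alpha_{g}$ constant and hence $g$ itself K\"ahler with constant positive scalar curvature) and makes $\alpha_{g}^{1/3}$ superharmonic for $g$ (ruling out a nonempty zero set by the boundary-integral argument). Your approach is self-contained relative to this paper (it needs only Theorems~1--3 and the integration-by-parts mechanism of Proposition~7), and it makes transparent why the equality case forces constant scalar curvature, whereas the paper's proof is shorter but opaque, resting on an external Weitzenb\"ock computation. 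Two small corrections: the superharmonicity statement you invoke is Proposition~7, not Proposition~8 (Proposition~8 is the subharmonic variant), and your claim that its proof yields $d\Phi=0$ from non-strict superharmonicity is right as far as the paper's own chain of inequalities goes --- strictness there is used only to manufacture the final contradiction, which you replace by the continuity argument $\alpha_{g}|_{N}=0$ forcing the constant to vanish. Both of these are cosmetic; the mathematical content of your proposal stands.
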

\begin{proof}
By Theorem 3, we have 
\[\lambda_{1}=\lambda_{2}=-\frac{\lambda_{3}}{2}.\]
In particular, $\lambda_{1}=\lambda_{2}\leq 0$. 
Then we have 
\[2\lambda_{2}^{2}+4\lambda_{1}\lambda_{3}-\frac{\lambda_{2}s}{2}=-6\lambda_{1}\left(\lambda_{1}+\frac{s}{12}\right)\leq 0.\]
Then by the same argument with Theorem 16 in [15], we get the result. 
\end{proof}

\vspace{20pt}

\section{\large\textbf{Einstein four-manifolds with pinched self-dual Weyl curvature}}\label{S:Intro}

\begin{Theorem}
Let $(M, h)$ be a compact oriented Einstein four-manifold. 
Suppose $\lambda_{2}\leq\frac{\lambda_{3}}{4}$, where $\lambda_{i}$ are the eigenvalues of $W^{+}$ such that $\lambda_{1}\leq\lambda_{2}\leq\lambda_{3}$. 
Suppose $(M, h)$ is not anti-self-dual. Then $W^{+}$ is not zero everywhere. Moreover, 
\begin{itemize}
\item $b_{+}(M)=1$ and $(M, h)$ is conformal to a K\"ahler metric with positive scalar curvature; or
\item $b_{+}(M)=0$ and the pull-back of $(M, h)$ to the double cover of $M$ is conformal to a K\"ahler metric with positive scalar curvature. 
\end{itemize}
\end{Theorem}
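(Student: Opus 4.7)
The plan is simply to splice together Theorem 3 of the paper with Derdzi\'nski's theorem (mentioned in the introduction) and then invoke LeBrun's Theorem 1 to extract the conformal K\"ahler conclusion. First, since $(M,h)$ is Einstein, the second Bianchi identity forces $\delta W^{+}=0$, so the hypotheses of Theorem 3 are satisfied. Applying Theorem 3 with the pinching assumption $\lambda_{2}\le\lambda_{3}/4$ yields that $W^{+}$ has a degenerate spectrum on all of $M$ and $\det W^{+}\ge 0$.

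Next, I would invoke Derdzi\'nski's dichotomy: a compact Einstein four-manifold whose self-dual Weyl tensor has degenerate spectrum is either anti-self-dual or has $W^{+}$ nowhere zero. Since by hypothesis $(M,h)$ is not anti-self-dual, this forces $W^{+}$ to be nowhere vanishing on $M$, giving the first assertion of the theorem.

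With $W^{+}$ nowhere zero, $\delta W^{+}=0$, and the pinching $\lambda_{2}\le\lambda_{3}/4$ in hand, Theorem 1 (LeBrun) applies verbatim and produces the stated dichotomy: either $b_{+}(M)=1$ and $(M,h)$ is globally conformal to a K\"ahler metric with positive scalar curvature, or $b_{+}(M)=0$ and the pull-back of $(M,h)$ to the orientation double cover of the eigenline bundle $L$ is conformal to such a K\"ahler metric. Since the extended line bundle is trivial if and only if $b_{+}(M)\ge 1$, the two cases correspond exactly to whether or not one needs to pass to the double cover.

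There is essentially no obstacle here; all the analytic content has already been done in Theorem 3 (the hard step was constructing the almost-K\"ahler structure and showing $\nabla\omega=0$ on $X=M-N$). The present statement is really a synthesis: the Einstein hypothesis is used only to promote Theorem 3 (``degenerate spectrum on $M$") to the stronger statement ``$W^{+}$ is nowhere zero on $M$" via Derdzi\'nski, and to guarantee the harmonicity $\delta W^{+}=0$ needed for Theorem 1.
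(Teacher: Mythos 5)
Your proposal is correct and follows exactly the paper's own argument: Einstein implies $\delta W^{+}=0$, Theorem 3 gives the degenerate spectrum, Derdzi\'nski's dichotomy (Proposition 5 in [8]) upgrades this to $W^{+}$ nowhere zero since $(M,h)$ is not anti-self-dual, and Theorem 1 then yields the stated conformal K\"ahler alternatives. No further comment is needed.
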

\begin{proof}
By Theorem 3, $W^{+}$ has a degenerate spectrum. 
For an oriented Einstein four-manifold with degenerate spectrum of $W^{+}$, Derdziński showed 
that either $(M, h)$ is anti-sekf-dual or $W^{+}$ is nowhere zero (Proposition 5 in [8]).
Then the result follows from Theorem 1. 
\end{proof}

We note that LeBrun showed there are 15 compact, oriented Einstein four-manifolds with $det W^{+}>0$, up to diffeomorphism [21].

\begin{Theorem}
Let $(M, g, \omega, J)$ be a compact K\"ahler-Einstein surface. Suppose 
\[H_{av}-H_{min}\leq\frac{5}{9}(H_{max}-H_{min}),\]
where $H_{max}(H_{min})$ denotes the maximum(minimum) of the holomorphic sectional curvature respectively 
and $H_{av}$ the average of the holomorphic sectional curvature. Then 
\begin{itemize}
\item $(M, g, \omega)$ has constant holomorphic sectional curvature; or
\item $(M, g, \omega)$ is $\mathbb{CP}_{1}\times\mathbb{CP}_{1}$ with the standard metric of constant curvature; or
\end{itemize}
\end{Theorem}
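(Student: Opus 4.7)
The plan is to translate the pointwise pinching on the holomorphic sectional curvature into the eigenvalue pinching $\lambda_2\le\lambda_3/4$ studied earlier, and then apply Theorems~5 and~7 to $(M,g)$ regarded with the \emph{reversed} orientation, exploiting that the Einstein condition is orientation-insensitive. Concretely, let $\lambda_1\le\lambda_2\le\lambda_3$ denote the eigenvalues of $W^-$ at a point. Berger's theorem gives $H_{av}=s/6$, while Corollary~2 supplies $H_{max}-H_{min}=(\lambda_3-\lambda_1)/2$ and $H_{av}-H_{min}=-\lambda_1/2$. A one-line manipulation shows $H_{av}-H_{min}\le\tfrac{5}{9}(H_{max}-H_{min})$ is equivalent to $5\lambda_3+4\lambda_1\ge 0$, which (via $\lambda_1+\lambda_2+\lambda_3=0$) is exactly $\lambda_2\le\lambda_3/4$. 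Passing to the opposite orientation $\bar g$ gives $W^+_{\bar g}=W^-_g$ and $\delta W^+_{\bar g}=0$ since $g$ is Einstein, so $(M,\bar g)$ is a compact oriented four-manifold with harmonic self-dual Weyl curvature satisfying $\lambda_2\le\lambda_3/4$.

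I would then split on the sign of the constant scalar curvature $s$. If $s\le 0$, then $\bar g$ is of nonpositive type, Theorem~5 yields $W^+_{\bar g}=W^-_g\equiv 0$, and Corollary~2 forces $H_{max}=H_{min}=s/6$ everywhere, so $(M,g,\omega)$ has constant holomorphic sectional curvature. If $s>0$, I invoke Theorem~7 on $(M,\bar g)$: either $\bar g$ is anti-self-dual, giving the same conclusion, or $W^+_{\bar g}$ is nowhere zero and $(M,\bar g)$ (or, in the $b_+=0$ subcase, its double cover) is conformal to a K\"ahler metric of positive scalar curvature. Since $\bar g$ is itself Einstein, Obata-type rigidity for conformally related Einstein metrics on a compact manifold that is not a round sphere forces the conformal factor to be constant, so $\bar g$ is itself K\"ahler-Einstein with respect to the reversed orientation.

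Thus $(M,g)$ carries two parallel K\"ahler structures $(J,\omega)$ and $(\tilde J,\tilde\omega)$ of opposite orientations. A short calculation in an adapted orthonormal frame shows $(J\tilde J)^2=I$, so $J\tilde J$ is a parallel symmetric involution of $TM$; its $\pm 1$-eigendistributions give a parallel orthogonal splitting $TM=V_1\oplus V_2$ into two rank-two summands. De Rham's theorem, combined with simply-connectedness of a Fano K\"ahler-Einstein surface, then yields an isometric product decomposition $(M,g)=\Sigma_1\times\Sigma_2$, and the Einstein condition forces the two factors to have equal constant Gauss curvature. With $s>0$, this singles out $\mathbb{CP}_1\times\mathbb{CP}_1$ with the standard product of round metrics of equal radii.

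The hard part will be the rigidity step producing the genuine product decomposition from the two opposite-orientation K\"ahler structures; a secondary technical point is bookkeeping the possible double cover in Theorem~7, which a $b_-$-count on Fano surfaces shows is only needed for $\mathbb{CP}^2$, itself already handled by the constant-HSC branch since its Fubini-Study metric is self-dual.
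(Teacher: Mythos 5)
Your reduction of the hypothesis to the eigenvalue pinching $\lambda_2\le\lambda_3/4$ for $W^-$ and the plan of applying Theorem 7 with the orientation reversed is exactly the paper's strategy, and the endgame in the genuinely K\"ahler case (two parallel opposite-orientation K\"ahler structures forcing a de Rham splitting into $\mathbb{CP}_1\times\mathbb{CP}_1$) matches the paper's appeal to the local symmetry/splitting result. The problem is the step in between: you invoke ``Obata-type rigidity for conformally related Einstein metrics'' to conclude that the conformal factor relating $\bar g$ to the K\"ahler metric produced by Theorem 7 is constant. Obata's theorem requires \emph{both} metrics in the conformal class to be Einstein; Theorem 7 only delivers a K\"ahler metric of positive scalar curvature, with no Einstein condition on it, so the theorem does not apply. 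And the conclusion you want is false in general: the Page metric on $\mathbb{CP}_2\#\overline{\mathbb{CP}_2}$ is Einstein and conformal to a K\"ahler metric of positive scalar curvature with a genuinely non-constant conformal factor. This strictly-conformally-K\"ahler case is precisely the one your argument silently discards.

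The paper closes that case by a different mechanism: if the complex structure $J'$ on $\overline{M}$ is non-K\"ahler for $g$, then by Derdzi\'nski and LeBrun the underlying manifold must be $\mathbb{CP}_2\#\overline{\mathbb{CP}_2}$; but $(M,g,\omega,J)$ is K\"ahler--Einstein with $s>0$, hence a Del Pezzo surface, and $\mathbb{CP}_2\#\overline{\mathbb{CP}_2}$ admits no K\"ahler--Einstein metric by Matsushima's theorem (non-reductive automorphism group). You need some argument of this type -- using the K\"ahler--Einstein hypothesis on the \emph{original} orientation to rule out the Page-type possibility on the reversed one -- rather than conformal rigidity. Your remaining pieces (the $s\le 0$ branch via Theorem 5, and disposing of the double-cover alternative by noting $b_-(M)=0$ forces $\mathbb{CP}_2$, which is simply connected and already self-dual) are fine and essentially agree with the paper.
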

\begin{proof}
By changing the orientation, 
we can use $W^{-}$ instead of $W^{+}$ in Theorem 7. 
Suppose $W^{-}\not\equiv 0$.
By Berger's Theorem [5], we have $H_{av}(p)=\frac{s}{6}$. 
Let $\lambda_{i}$ be the eigenvalues of $W^{-}$ such that $\lambda_{1}\leq\lambda_{2}\leq\lambda_{3}$. 
By Corollary 2, 
it follows that $-\frac{4}{5}\lambda_{1}\leq\lambda_{3}$. We note that $-\frac{4}{5}\lambda_{1}\leq\lambda_{3}$ is equivalent to $\lambda_{2}\leq\frac{\lambda_{3}}{4}$
since $\lambda_{1}+\lambda_{2}+\lambda_{3}=0$. 
Suppose $(M, g, \omega)$ is not self-dual. By Theorem 7, suppose $(\overline{M}, g)$ is an Einstein manifold 
which is conformal to a K\"ahler metric with positive scalar curvature. 
In particular, $\overline{M}$ admits a complex structure $J'$. Since $(M, g, \omega, J)$ is K\"ahler with positive scalar curvature. 
$b_{+}(M)=1$ [4], [28]. 
Thus, $\tau(M)=0$.
If $(\overline{M}, g, J')$ is not K\"ahler, $(\overline{M}, J')$ is $\mathbb{CP}_2{}\#\overline{\mathbb{CP}_{2}}$ by [8], [16]. 
In particular, $M$ is diffeomorphic to $\mathbb{CP}_2{}\#\overline{\mathbb{CP}_{2}}$. 
Since $(M, g, \omega, J)$ is K\"ahler-Einstein, $(M, J)$ is biholomorphic to a Del-Pezzo surface. 
Since $M$ is diffeomorphic to $\mathbb{CP}_2{}\#\overline{\mathbb{CP}_{2}}$, $(M, J)$ is biholomorphic to $\mathbb{CP}_{2}\#\overline{\mathbb{CP}_{2}}$.
On the other hand, $\mathbb{CP}_2{}\#\overline{\mathbb{CP}_{2}}$ does not admit a K\"ahler-Einstein metric [22]. 
Suppose $(\overline{M}, g, J')$ is K\"ahler. 
Then $(M, g, \omega)$ splits and $(M, g, \omega)$ is locally symmetric. 
Thus, $(M, g, \omega)$ is $\mathbb{CP}_{1}\times\mathbb{CP}_{1}$ with the standard metric of constant curvature ([9],  p. 125). 
Suppose the double cover with the opposite orientation $(\hat{\overline{M}}, \hat{g})$ is conformal to a K\"ahler metric with positive scalar curvature. 
Then $b_{-}=0$. By Corollary 5 in [13], we get $(M, g, \omega)$ is self-dual K\"ahler-Einstein with positive scalar curvature, 
which is $\mathbb{CP}_{2}$ with the Fubini-Study metric up to rescaling.
This is a contradiction since $\mathbb{CP}_{2}$ is simply-connected. 
If $(M, g)$ is self-dual, $(M, g, \omega)$ is a K\"ahler surface with constant holomorphic sectional curvature by Corollary 2. 
\end{proof}

\begin{Theorem}
(Hall-Murphy) Let $(M, g, J)$ be an almost Hermitian four-manifold and $H$ be the holomorphic sectional curvature. 
Then 
\[H_{av}(p):=\frac{1}{Vol(\mathbb{S}^{3})}\int_{S_{p}}Hd\sigma=\frac{s+3s^{*}}{24},\]
where $S_{p}$ is the unit sphere in the tangent space at $p$
and $d\sigma$ is the volume for the sphere of radius 1. 
\end{Theorem}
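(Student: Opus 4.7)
The plan is to express the holomorphic sectional curvature in terms of the orthogonal holomorphic sectional curvature, invoke Lemma 2, and then kill the remaining $W^{-}$ contribution by a symmetry argument on the unit sphere. For an orthonormal frame $\{X,JX,Y,JY\}$, the proof of Lemma 2 actually yields the pointwise identity
\[H_{ohol}(X,Y)=\frac{s+3s^{*}}{24}+\frac{1}{2}W^{-}(\phi,\phi),\qquad \phi=\tfrac{1}{\sqrt{2}}(X\wedge JX-Y\wedge JY),\]
with $\phi$ a unit anti-self-dual 2-form. Since $H_{ohol}(X,Y)=\tfrac{1}{2}(H(X)+H(Y))$ is symmetric in $X$ and $Y$, averaging this identity over orthonormal pairs on the unit sphere of $T_{p}M$ recovers $H_{av}(p)$ itself: fix $X\in S_{p}$, let $Y$ range over the unit circle in the $J$-orthogonal complement of $\mathrm{span}(X,JX)$, and apply Fubini combined with the $X\leftrightarrow Y$ symmetry.

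It then remains to show that the frame average $\overline{W^{-}(\phi,\phi)}$ vanishes. By Lemma 1, the assignment $(X,Y)\mapsto\phi$ surjects onto the unit sphere $S^{2}\subset\Lambda^{-}$. Replacing $X$ by $\cos\theta\,X+\sin\theta\,JX$ preserves the $J$-plane $\mathrm{span}(X,JX)$ and hence preserves $\phi$, so this map factors through the Hopf-type fibration coming from the $U(1)$-action. By $U(2)$-equivariance, the pushforward of the round measure on orthonormal pairs is rotationally invariant, hence a constant multiple of the round measure on $S^{2}\subset\Lambda^{-}$. Therefore
\[\overline{W^{-}(\phi,\phi)}=\frac{1}{\mathrm{vol}(S^{2})}\int_{S^{2}}W^{-}(\phi,\phi)\,d\sigma=\frac{\mathrm{tr}(W^{-})}{3}=0,\]
since $W^{-}$ is trace-free. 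Combining the two steps gives $H_{av}(p)=\frac{s+3s^{*}}{24}$.

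The main technical point is the symmetry step, namely checking that the pushforward of the orthonormal-frame measure under $(X,Y)\mapsto\phi$ really is $SO(3)$-invariant on the unit $2$-sphere of $\Lambda^{-}$. An alternative route avoids Lemma 2 entirely by directly decomposing $X\wedge JX=\tfrac{1}{2}\omega+\tfrac{1}{\sqrt{2}}\phi(X)$ and expanding $H(X)=R(X\wedge JX,X\wedge JX)$ through the block form of $\mathfrak{R}$; one then needs to check that both the quadratic term $W^{-}(\phi,\phi)$ and the off-diagonal cross term $R(\omega,\phi(X))$ average to zero, which again follows from the same $SO(3)$-invariance principle (the cross term being simpler, as it is linear rather than quadratic in $\phi$).
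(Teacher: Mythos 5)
The paper does not actually prove this statement: Theorem 9 is quoted from Hall--Murphy [11] with no argument given, so there is no in-paper proof to compare against. Your proposal is, however, a correct and essentially self-contained derivation, and it meshes well with the paper's own machinery. The pointwise identity $H_{ohol}(X,Y)=\frac{s+3s^{*}}{24}+\frac{1}{2}W^{-}(\phi,\phi)$ is exactly what the chain of equalities inside the proof of Lemma 2 yields for an \emph{arbitrary} orthonormal frame $\{X,JX,Y,JY\}$ (the eigenvector hypothesis there is only used afterwards to pass to inequalities), so that step is sound. The two measure-theoretic points you flag are both fine: the space of admissible pairs $(X,Y)$ is a homogeneous space of $U(2)$, so its invariant probability measure has uniform marginals, which justifies $\overline{H_{ohol}}=H_{av}$; and since the image of $U(2)$ in $SO(\Lambda^{-})\cong SO(3)$ is all of $SO(3)$ (its image in $SO(\Lambda^{+})$ is only the circle stabilizing $\omega$, so by dimension count it must surject onto the other factor), the pushforward of that measure under the $U(2)$-equivariant, surjective map $(X,Y)\mapsto\phi$ is the round measure on $S^{2}\subset\Lambda^{-}$, whence $\overline{W^{-}(\phi,\phi)}=\tfrac{1}{3}\mathrm{tr}\,W^{-}=0$. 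The alternative route you sketch at the end --- writing $X\wedge JX=\tfrac{1}{2}\omega+\tfrac{1}{\sqrt{2}}\phi(X)$, expanding $H(X)$ through the block form of $\mathfrak{R}$ to get $H(X)=\tfrac{s}{24}+\tfrac{s^{*}}{8}+\tfrac{1}{\sqrt{2}}\,ric_{0}(\omega,\phi(X))+\tfrac{1}{2}W^{-}(\phi(X),\phi(X))$, and averaging over $S^{3}$ --- is actually cleaner and closer to a from-scratch proof: it needs only that the single map $X\mapsto\phi(X)$ pushes the round measure on $S^{3}$ to the round measure on $S^{2}$ (the same $SO(3)$-surjectivity argument), kills the cross term because it is linear in $\phi$, and avoids the pair-measure bookkeeping entirely. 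Either version is acceptable; if you keep the first, state explicitly that the Lemma 2 identity is frame-independent, since the paper only records it in the course of an eigenvector argument.
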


\begin{Theorem}
Let $(M, g, \omega)$ be a compact almost-K\"ahler Einstein four-manifold. Suppose 
\[H_{av}-H_{min}\leq\frac{5}{9}(H_{max}-H_{min}),\]
where $H_{max}(H_{min})$ denotes the maximum(minimum) of the holomorphic sectional curvature respectively 
and $H_{av}$ the average of the holomorphic sectional curvature. Then 
\begin{itemize}
\item $(M, g, \omega)$ is K\"ahler surface with nonnegative constant holomorphic sectional curvature; or
\item $(M, g, \omega)$ is $\mathbb{CP}_{1}\times\mathbb{CP}_{1}$ with the standard metric of constant curvature; or
\item $(M, g, \omega)$ is self-dual almost-K\"ahler Einstein with negative scalar curvature. 
\end{itemize}
\end{Theorem}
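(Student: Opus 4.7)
The plan is to mirror the proof of Theorem 8, replacing ``K\"ahler-Einstein'' with ``almost-K\"ahler Einstein'' and inserting Sekigawa's rigidity theorem to re-enter the K\"ahler regime. First I would translate the pinching into eigenvalue data. By Corollary 1 and Theorem 8 (Hall-Murphy), for an almost-Hermitian Einstein four-manifold one has $H_{av}=\frac{s+3s^{*}}{24}$, $H_{\max}=\frac{s+3s^{*}}{24}+\frac{\lambda_{3}}{2}$, and $H_{\min}=\frac{s+3s^{*}}{24}+\frac{\lambda_{1}}{2}$, where $\lambda_{i}$ denote the eigenvalues of $W^{-}$. Substituting reduces $H_{av}-H_{\min}\leq\frac{5}{9}(H_{\max}-H_{\min})$ to $-4\lambda_{1}\leq 5\lambda_{3}$, which, since $\lambda_{1}+\lambda_{2}+\lambda_{3}=0$, is equivalent to $\lambda_{2}\leq\frac{\lambda_{3}}{4}$. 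Reversing the orientation then puts the hypothesis of Theorem 7 in force on $W^{+}$.

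Next I would split on whether $W^{-}\equiv 0$. If so, Corollary 1 forces $H_{\max}=H_{\min}$ pointwise. When $s\geq 0$ I would invoke Sekigawa's theorem that a compact almost-K\"ahler Einstein four-manifold with nonnegative scalar curvature must be K\"ahler; combined with $s^{*}=s$ on K\"ahler surfaces this yields a K\"ahler surface with constant holomorphic sectional curvature $s/6\geq 0$, the first alternative. When $s<0$ the structure is already self-dual almost-K\"ahler Einstein with negative scalar curvature, the third alternative.

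If $W^{-}\not\equiv 0$, Theorem 7 applied to the reversed orientation says that $(M,g)$, or its double cover, is conformal to a K\"ahler metric $g'=u^{2}g$ of positive scalar curvature. To reduce to Theorem 8 I need $(M,g,\omega,J)$ to be genuinely K\"ahler, and I would first obtain this by proving $s_{g}>0$: the conformal scalar-curvature identity $6\Delta_{g}u=s_{g}u-u^{3}s_{g'}$, combined with $s_{g'}>0$ and the maximum principle applied to the positive function $u$ at its extremum on the compact manifold, rules out $s_{g}\leq 0$. Sekigawa's theorem then elevates $(M,g,\omega,J)$ to K\"ahler-Einstein with positive scalar curvature, and Theorem 8 delivers either constant holomorphic sectional curvature or $\mathbb{CP}_{1}\times\mathbb{CP}_{1}$ with its standard metric. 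The double-cover sub-case of Theorem 7 is handled verbatim as in the proof of Theorem 8 via Corollary 5 of [13], which forces the $\mathbb{CP}_{2}$ Fubini-Study model and contradicts the nontriviality of the covering.

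The main obstacle is the passage from almost-K\"ahler back to K\"ahler needed to apply Theorem 8, since the complex-geometric classification used there does not extend directly to non-integrable $J$. Sekigawa's theorem supplies this bridge, but only once one has extracted positive-scalar-curvature information from the conformal maximum-principle step above; this is the genuinely new ingredient beyond a transcription of Theorem 8's argument.
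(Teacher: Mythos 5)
Your proposal is correct and follows essentially the same route as the paper: reduce the pinching to $\lambda_{2}\leq\frac{\lambda_{3}}{4}$ for $W^{-}$ via Hall--Murphy and Corollary 1, dispose of the nonpositive-scalar-curvature case by self-duality, and in the positive case use Sekigawa's theorem to pass to the K\"ahler--Einstein setting and invoke Theorem 8. The only organizational difference is that you split on $W^{-}\equiv 0$ versus $W^{-}\not\equiv 0$ and extract $s>0$ by a conformal maximum principle, whereas the paper splits on the sign of $s$ and cites its earlier anti-self-duality result directly --- the same underlying Yamabe-type observation.
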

\begin{proof}
By Theorem 9, we have $H_{av}(p):=\frac{1}{Vol(\mathbb{S}^{3})}\int_{S_{p}}Hd\sigma=\frac{s+3s^{*}}{24}$.
By Corollary 1, we get $\lambda_{2}\leq\frac{\lambda_{3}}{4}$, where $\lambda_{i}$ are the eigenvalues of $W^{-}$ such that 
$\lambda_{1}\leq\lambda_{2}\leq\lambda_{3}$. Suppose the scalar curvature is nonpositive. Then $(M, g)$ is self-dual by Theorem 7. 
Suppose the scalar curvature is positive and $(M, g)$ is not self-dual. 
We note that a compact almost-K\"ahler Einstein four-manifold with nonnegative scalar curvature is K\"ahler-Einstein [25]. 
Then the result follows from Theorem 8. 
\end{proof}

Goldberg's conjecture is that a compact almost-K\"ahler Einstein manifold is K\"ahler-Einstein. 
In case of nonnegative scalar curvature, this was shown by Sekigawa [25].
When there is a pinching condition on $W^{+}$ instead of $W^{-}$, we get one of the special cases of Goldberg's conjecture.

\begin{Proposition}
Let $(M, g, \omega)$ be a compact almost-K\"ahler Einstein four-manifold. Suppose $\lambda_{2}\leq\frac{\lambda_{3}}{4}$,
where $\lambda_{i}$ are the eigenvalues of $W^{+}$ such that $\lambda_{1}\leq\lambda_{2}\leq\lambda_{3}$. 
\begin{itemize}
\item$(M, g, \omega)$ is K\"ahler-Einstein with positive scalar curvature; or
\item The universal cover of $(M, g, \omega)$ is a torus with the flat metric or $K3$-surface with the Calabi-Yau metric.
\end{itemize}
\end{Proposition}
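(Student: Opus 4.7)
The plan is to apply Theorem 7 to the Einstein four-manifold $(M,g)$ and handle each alternative using the almost-K\"ahler form $\omega$. Since $g$ is Einstein, the contracted second Bianchi identity yields $\delta W^{+}=0$, so Theorem 7 gives three cases: $(M,g)$ is anti-self-dual, or $b_+(M)=1$ and $(M,g)$ is conformal to a K\"ahler metric of positive scalar curvature, or $b_+(M)=0$ and the same holds on a double cover. Because $\omega$ is a self-dual 2-form of constant length with $d\omega=0$, and $*\omega=\omega$ gives $\delta\omega=-*d\omega=0$, it is a nonzero harmonic self-dual 2-form; hence $b_+(M)\ge 1$, ruling out the third case.

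In the conformally K\"ahler case, the K\"ahler representative $h\in[g]$ has $s_h>0$ pointwise, so by the Kazdan--Warner trichotomy the Yamabe invariant $Y([g])>0$. By Obata's theorem, an Einstein metric in its conformal class realizes the Yamabe minimum (it is the unique constant-scalar-curvature representative up to scale), so $s_g$ has the same sign as $Y([g])$, forcing $s_g>0$. Sekigawa's theorem, cited immediately before the proposition, then gives that $(M,g,\omega)$ is K\"ahler-Einstein with positive scalar curvature, establishing the first bullet.

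In the anti-self-dual case, $W^{+}\equiv 0$ combined with the identities $s^{*}=s/3+2W^{+}(\omega,\omega)$ and (for almost-K\"ahler) $s^{*}=s+|\nabla\omega|^{2}$, both derived in Section 2, yields $|\nabla\omega|^{2}=-2s/3$ pointwise, so $s\le 0$. When $s=0$, $\nabla\omega=0$ forces $(M,g,\omega)$ to be K\"ahler and, being Einstein with $s=0$, Ricci-flat K\"ahler; the hyperk\"ahler classification of compact Ricci-flat K\"ahler surfaces then identifies the universal cover with either the flat 4-torus or $K3$ with its Calabi--Yau metric, yielding the second bullet. The main obstacle is excluding the sub-case $s<0$, which is a restricted form of Goldberg's conjecture. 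I would close it by invoking Armstrong's theorem that every compact anti-self-dual almost-K\"ahler Einstein four-manifold is Ricci-flat K\"ahler, or alternatively by substituting the constant $|\nabla\omega|^{2}=-2s/3$ into Sekigawa's integral identity specialized to $W^{+}=0$: the identity then reduces to a positive multiple of $\int_{M} s^{2}\,d\mu$ being zero, which forces $s=0$ and returns to the previous sub-case.
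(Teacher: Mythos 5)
Your overall structure matches the paper's: apply Theorem 7, dispose of the double-cover alternative because $\omega$ itself is a nonzero self-dual harmonic form forcing $b_{+}\geq 1$, and split the remaining analysis into the conformally K\"ahler case and the anti-self-dual case. In the conformally K\"ahler case you take a genuinely different (and valid) route: the paper argues that, since $b_{+}=1$, the K\"ahler form of the conformal K\"ahler metric must be proportional to $\omega$, and the constancy of $|\omega|$ with respect to both metrics forces the conformal factor to be constant, so $g$ is already K\"ahler; you instead deduce $s_{g}>0$ from the sign-invariance of the type of the conformal class and then invoke Sekigawa. Both work; yours leans on Sekigawa's theorem where the paper's is self-contained, and you do not need Obata at all for the sign argument --- only the fact (cited in the paper as [17]) that conformally equivalent metrics of constant-sign scalar curvature have the same sign. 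The $s=0$ sub-case is handled identically.

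The genuine gap is the sub-case you yourself identify as the main obstacle: anti-self-dual with $s<0$. Neither of your two proposed closings is actually carried out. Option (a) cites ``Armstrong's theorem that every compact anti-self-dual almost-K\"ahler Einstein four-manifold is Ricci-flat K\"ahler,'' but that statement is essentially the content of this sub-case; what Armstrong's paper [1] supplies (and what this paper actually uses) is the weaker topological fact that a compact almost-K\"ahler four-manifold with $\nabla\omega$ nowhere zero (equivalently $s^{*}\neq s$ everywhere) must satisfy $(5\chi+6\tau)(M)=0$. Option (b), ``substitute into Sekigawa's integral identity and get a positive multiple of $\int s^{2}$,'' is an unexecuted computation, and it is not at all clear it goes through: the sign of the $s(s^{*}-s)$-type term in that identity is exactly what breaks for $s<0$. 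The paper closes the case as follows: from $W^{+}=0$ and $s<0$ one gets $s^{*}-s=|\nabla\omega|^{2}=-2s/3>0$ everywhere, so Armstrong's result gives $(5\chi+6\tau)(M)=0$; the Gauss--Bonnet formula for an Einstein metric gives $\chi(M)\geq 0$, while the Hitchin--Thorpe inequality $(2\chi+3\tau)(M)\geq 0$ together with $\chi=-2(2\chi+3\tau)$ gives $\chi(M)\leq 0$; hence $\chi(M)=0$, which by Gauss--Bonnet forces $s=0$, a contradiction. You should replace your sketch with this (or an equivalently complete) argument.
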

\begin{proof}
Suppose $(M, g)$ is not anti-self-dual. Then by Theorem 7, $(M, g)$ is conformal to a K\"ahler metric with positive scalar curvature. 
Moreover, $b_{+}=1$. Since a self-dual harmonic 2-form is conformally invariant, $\omega$ is a K\"ahler form with respect to a conformal metric. 
Since the length of an almost-K\"ahler form and a K\"ahler form is constant, $(M, g, \omega)$ is K\"ahler-Einstein with positive scalar curvature. 
Suppose $(M, g, \omega)$ is anti-self-dual almost-K\"ahler Einstein four-manifold. 
Thus, $s\leq 0$. 
Suppose $s=0$. Then the universal cover of $(M, g, \omega)$ is a torus with the flat metric or $K3$-surface with the Calabi-Yau metric. 
Suppose $s<0$. 
Since $s^{*}=\frac{s}{3}+2W^{+}(\omega, \omega)$, we get $s^{*}=\frac{s}{3}$ for an anti-self-dual almost-K\"ahler four-manifold. 
Thus, $s^{*}\neq s$ everywhere. 
On the other hand, for a compact almost-K\"ahler four-manifold, there is at least one point such that $s^{*}=s$
unless $(5\chi+6\tau)(M)=0$ [1]. Thus, we get  $(5\chi+6\tau)(M)=0$.
From the following formula, 
\[\chi(M)=\frac{1}{8\pi^{2}}\int_{M}\left(\frac{s^{2}}{24}+|W^{+}|^{2}+|W^{-}|^{2}-\frac{|ric_{0}|^{2}}{2}\right)d\mu,\]
it follows that 
$\chi(M)\geq 0$. 
On the other hand, for a compact Einstein four-manifold $(2\chi+3\tau)(M)\geq 0$. 
Since $(5\chi+6\tau)(M)=0$, we get $\chi(M)\leq 0$. 
Thus, $\chi(M)=0$. 
Then we have $s=W^{-}=0$, which is a contradiction. 
\end{proof}

\vspace{50pt}
$\mathbf{Acknowledgements}$: The author would like to thank Prof. Claude LeBrun for helpful comments and his articles [18], [19], [20].

\newpage

\end{document}